\newtheorem{defn}{Definition}[section]
\newtheorem{theorem}{Theorem}[section]
\newtheorem{prop}{Proposition}[section]
\newtheorem{lemma}{Lemma}[section]
\newtheorem{coro}{Corollary}[section]
\newtheorem{remark}{Remark}[section]
\newtheorem{exam}{Example}[section]
\newcommand{\ml}{\mathcal}
\newcommand{\mb}{\mathbb}
\title{On the critical regularity of nonlinearities for semilinear classical\\ wave equations}
\author[1]{Wenhui Chen\thanks{Wenhui Chen (wenhui.chen.math@gmail.com)}}
\affil[1]{School of Mathematics and Information Science, Guangzhou University, 510006 Guangzhou, China}
\author[2]{Michael Reissig\thanks{Michael Reissig (reissig@math.tu-freiberg.de)}}
\affil[2]{Institute of Applied Analysis, Faculty of Mathematics and Computer Science, Technical University Bergakademie Freiberg, 09596 Freiberg, Germany}
\date{}
\begin{document}

		\maketitle
		\begin{abstract}
			\medskip
			In this paper, we consider the Cauchy problem for   semilinear classical wave equations
			\begin{equation*}
			u_{tt}-\Delta u=|u|^{p_S(n)}\mu(|u|)
			\end{equation*}
			 with the Strauss exponent $p_S(n)$ and a modulus of continuity $\mu=\mu(\tau)$, which provides an additional regularity of nonlinearities in $u=0$ comparing with the power nonlinearity $|u|^{p_S(n)}$. We obtain a sharp condition on $\mu$ as a threshold between global (in time) existence of small data radial solutions by deriving polynomial-logarithmic type weighted $L^{\infty}_tL^{\infty}_r$ estimates, and blow-up of solutions in finite time even for small data by applying iteration methods with slicing procedure. These results imply the critical regularity of source nonlinearities for semilinear classical wave equations.
			\\
			
			\noindent\textbf{Keywords:} semilinear wave equation,  critical regularity of nonlinearities, the Strauss exponent, modulus of continuity, global existence of solution, blow-up of solution.\\
			
			\noindent\textbf{AMS Classification (2020)} 35L71, 35L05, 35B33, 35A01, 35B44
		\end{abstract}
\fontsize{12}{15}
\selectfont
\section{Introduction}\label{Section_Introduction}
$\ \ \ \ $In the last forty years, the Cauchy problem for  semilinear classical wave equations with power nonlinearity, namely,
\begin{align}\label{Semilinear-Wave}
\begin{cases}
	u_{tt}-\Delta u=|u|^{p},&x\in\mb{R}^n,\ t>0,\\
	u(0,x)=u_0(x),\ \ u_t(0,x)=u_1(x),&x\in\mb{R}^n,
\end{cases}	
\end{align}
with $p>1$, has been deeply studied by the mathematical community. For example, the questions on global (in time) existence of solutions, blow-up of solutions in finite time and sharp lifespan estimates of solutions were of interest.  In particular, the critical exponent for the semilinear Cauchy problem \eqref{Semilinear-Wave} is given by the so-called Strauss exponent $p_S(n)$, which was proposed by Walter A. Strauss in \cite{Strauss=1981}. Nowadays, the correctness of the Strauss exponent is well-known. The Strauss exponent $p_S(n)$ is the positive root of the quadratic equation
\begin{align}\label{Qud-Strauss}
	(n-1)p^2-(n+1)p-2=0
\end{align}
for $n\geqslant2$, that is,
\begin{align*}
p_S(n):=\frac{n+1+\sqrt{n^2+10n-7}}{2(n-1)} \ \ \mbox{when}\ \ n\geqslant2,
\end{align*}
 and we put $p_S(1):=+\infty$. On one hand, for blow-up results when $1<p\leqslant p_S(n)$, we refer interested readers to the classical papers \cite{John=1979,Kato=1980,Glassey=1981,Sideris=1984,Schaeffer=1985,Jiao-Zhou=2003,Yordanov-Zhang=2006,Zhou=2007} and the new proofs proposed in \cite{Wakasa-Yordanov=2019,Ikeda-Sobajima-Wakasa=2019}. On the other hand, concerning global (in time) existence results when $p>p_S(n)$, we refer to \cite{John=1979,Glassey=1981-GESDS,Lindblad-Sogge=1995,Georgiev-Lindblad=1997,Tataru=2001} and references therein. Summarizing these known results, in the scale of power nonlinearities $\{|u|^p\}_{p>1}$, the critical exponent $p=p_S(n)$ for semilinear classical wave equations \eqref{Semilinear-Wave} has been found, to be the threshold condition between global (in time) existence of solutions and blow-up of local (in time) solutions with small initial data.

Nevertheless, to determine the critical nonlinearity or the critical regularity of nonlinearities, it seems too rough to restrict the consideration of   semilinear wave equations \eqref{Semilinear-Wave} to the scale of power nonlinearities $\{|u|^p\}_{p>1}$. The question of the critical regularity of nonlinearities for semilinear classical wave equations is completely open as far as the authors know. For this reason, our contribution of this paper is to give an answer to this question for a class of modulus of continuity. Furthermore, we will suggest a candidate for the general critical nonlinearity via our derived results.

 In this manuscript, we consider the following Cauchy problem for semilinear classical wave equations with modulus of continuity in the nonlinearity:
\begin{align}\label{Semilinear-Wave-Modulus}
\begin{cases}
u_{tt}-\Delta u=|u|^{p_S(n)}\mu(|u|),&x\in\mb{R}^n,\ t>0,\\
u(0,x)=u_0(x),\ \ u_t(0,x)=u_1(x),&x\in\mb{R}^n,
\end{cases}
\end{align}
for $n\geqslant 2$ (due to $p_S(1)=+\infty$), where $p_S(n)$ stands for the Strauss exponent, and $\mu=\mu(\tau)$ is a modulus of continuity. To be specific, a function $\mu:[0,+\infty)\to[0,+\infty)$ is called a modulus of continuity, if $\mu$ is a continuous, concave and increasing function satisfying $\mu(0)=0$. The additional term of modulus of continuity provides an additional regularity of the nonlinear term in $u=0$ in the Cauchy problem \eqref{Semilinear-Wave-Modulus} comparing with the power nonlinearity $|u|^{p_S(n)}$. Note that the critical nonlinearity has been studied recently in semilinear classical damped wave equations \cite{Ebert-Girardi-Reissig=2020} and the corresponding weakly coupled systems \cite{Dao-Reissig=2021}. Nevertheless, due to the lack of crucial damping mechanisms, the study of the semilinear Cauchy problem \eqref{Semilinear-Wave-Modulus} is not a generalization of those of \cite{Ebert-Girardi-Reissig=2020,Dao-Reissig=2021}, e.g. the usual test function methods and the Matsumura type $L^p-L^q$ estimates do not work for our model \eqref{Semilinear-Wave-Modulus}.

The main purpose of this paper is to derive the critical regularity of nonlinearities for the semilinear Cauchy problem \eqref{Semilinear-Wave-Modulus}, namely, the threshold condition for modulus of continuity $\mu$. First of all, by applying iteration methods with slicing procedure (motivated by \cite{Agemi-Kurokawa-Takamura=2000,Wakasa-Yordanov=2019}) for a new weighted functional, which contains a local (in time) solution and a modulus of continuity, under some conditions of initial data, we will prove a blow-up result in Section \ref{Section-Blow-up} when
	\begin{align*}
	\lim_{\tau \to 0^+}\mu(\tau)\left(\log\frac{1}{\tau}\right)^{\frac{1}{p_S(n)}}\in[c_l,+\infty]
\end{align*}
with a suitably large constant $c_l\gg1$. Next, we will study the three dimensional Cauchy problem \eqref{Semilinear-Wave-Modulus} with modulus of continuity satisfying
	\begin{align*}
	 \lim_{\tau \to 0^+}\mu(\tau)\left(\log\frac{1}{\tau}\right)^{\frac{1}{p_S(3)}}=0
\end{align*}
 in the radial case. By developing polynomial-logarithmic type weighted $L^{\infty}_tL^{\infty}_r$ estimates via refined analysis in the $(t,r)$-plane, we will demonstrate global (in time) existence of small data radial solution in Section \ref{Section-GESDS}. The typical example is that for a modulus of continuity $\mu=\mu(\tau)$ with $\mu(0)=0$ which satisfies
\begin{align}\label{Int-example}
	\mu(\tau)=c_l\left(\log\frac{1}{\tau}\right)^{-\gamma} \ \ \mbox{with}\ \ c_l\gg1, \ \ \mbox{when}\ \ \tau\in(0,\tau_0].
\end{align}
Our results of this paper ensure that the critical regularity of nonlinearities $|u|^{p_S(3)}\mu(|u|)$ in semilinear classical wave equations with the modulus of continuity satisfying \eqref{Int-example} is described by the threshold
 $\gamma=\frac{1}{p_S(3)}$. Namely, global (in time) existence of solutions holds when $\gamma>\frac{1}{p_S(3)}$ and blow-up of solutions holds when $0<\gamma\leqslant\frac{1}{p_S(3)}$. Other examples will be shown in Section \ref{Section-Main-result}. To end this paper, we will give a conjecture for general conditions of the critical nonlinearity for the semilinear Cauchy problem \eqref{Semilinear-Wave-Modulus} as final remarks in Section \ref{Section-fINAL}.

\medskip
\noindent\textbf{Notation: } Firstly, $c$ and $C$ denote some positive constants, which may be changed
from line to line. We write $f\lesssim g$ if there exists a positive
constant $C$ such that $f\leqslant Cg$. The relation $f\simeq g$ holds if and only if $g\lesssim f\lesssim g$. Moreover, $B_R(0)$ denotes the ball around the origin with radius $R$. We denote $\langle y\rangle := 3+|y|$ for any $y\in\mb{R}$ throughout this manuscript.

\section{Main results}\label{Section-Main-result}
\setcounter{equation}{0}
$\ \ \ \ $Before stating our blow-up result, we firstly introduce the notion of energy solutions to the Cauchy problem \eqref{Semilinear-Wave-Modulus} that we are going to use later.
\begin{defn}\label{Defn-energy-solution}
	Let $u_0\in H^1$ and $u_1\in L^2$. We say that $u=u(t,x)$ is an energy solution to the semilinear Cauchy problem \eqref{Semilinear-Wave-Modulus} on $[0,T)$ if
	\begin{align*}
		u\in\ml{C}([0,T),H^1)\cap \ml{C}^1([0,T),L^2)\ \ \mbox{such that}\ \ |u|^{p_S(n)}\mu(|u|)\in L^1_{\mathrm{loc}}([0,T)\times\mb{R}^n)
	\end{align*}
	fulfills the next integral relation:
	\begin{align}\label{Eq.Defn.Energy.Solution.Crit.Case}
		&\int_{\mb{R}^n}u_t(t,x)\phi(t,x)\mathrm{d}x+\int_0^t\int_{\mb{R}^n}\big(\nabla u(s,x)\cdot\nabla \phi(s,x)-u_s(s,x)\phi_s(s,x)\big)\mathrm{d}x\mathrm{d}s\notag\\
		&\qquad=\int_{\mb{R}^n}u_1(x)\phi(0,x)\mathrm{d}x+\int_0^t\int_{\mb{R}^n}|u(s,x)|^{p_S(n)}\mu\big(|u(s,x)|\big)\phi(s,x)\mathrm{d}x\mathrm{d}s
	\end{align}
	for any $\phi\in\ml{C}_0^{\infty}([0,T)\times\mb{R}^n)$ and any $t\in[0,T)$.
\end{defn}
\begin{theorem}\label{Thm-Blow-up}
	Let us consider a modulus of continuity $\mu=\mu(\tau)$ with $\mu(0)=0$ satisfying
	\begin{align}\label{Assumption-Blow-up}
\lim_{\tau \to 0^+}\mu(\tau)\left(\log\frac{1}{\tau}\right)^{\frac{1}{p_S(n)}}=:C_{\mathrm{Str}}\in[c_l,+\infty]
	\end{align}
with a suitably large constant $c_l\gg1$.
We assume that the function $g:\tau\in\mb{R}\to g(\tau):=\tau[\mu(|\tau|)]^{\frac{1}{p_S(n)}}$ is convex on $\mb{R}$.
 Let $u_0\in H^1$ and $u_1\in L^2$ be non-negative, non-trivial and compactly supported functions with supports contained in $B_R(0)$ for some $R>0$. Let
\begin{align*}
	u\in\ml{C}([0,T),H^1)\cap \ml{C}^1([0,T),L^2)\ \ \mbox{such that}\ \ |u|^{p_S(n)}\mu(|u|)\in L^1_{\mathrm{loc}}([0,T)\times\mb{R}^n)
\end{align*}
be an energy solution to the semilinear Cauchy problem \eqref{Semilinear-Wave-Modulus} on $[0,T)$ for $n\geqslant 2$ according to Definition \ref{Defn-energy-solution}. Then, the energy solution $u$ blows up in finite time.
\end{theorem}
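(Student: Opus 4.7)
The plan is to implement an iteration scheme in the spirit of \cite{Agemi-Kurokawa-Takamura=2000,Wakasa-Yordanov=2019}, combined with a slicing device to absorb the logarithmic loss at the Strauss critical exponent and with a Jensen--H\"older step (justified by the convexity of $g(\tau)=\tau[\mu(|\tau|)]^{1/p_S(n)}$) that couples the iterated lower bounds for $u$ to the modulus of continuity $\mu$. Following Yordanov--Zhang, I would first introduce $\psi(x):=\int_{\mathbb{S}^{n-1}}e^{x\cdot\omega}\,\mathrm{d}\sigma_\omega$ and $\varphi(t,x):=e^{-t}\psi(x)$, a positive solution of $\varphi_{tt}-\Delta\varphi=0$ with the asymptotics $\varphi(t,x)\simeq (1+|x|)^{-(n-1)/2}e^{|x|-t}$. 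Finite speed of propagation gives $\operatorname{supp} u(t,\cdot)\subset B_{t+R}(0)$. Testing \eqref{Eq.Defn.Energy.Solution.Crit.Case} against a smooth cut-off of $\varphi$ and using the non-negativity of the Cauchy data, I would extract the frame bound
$$F(t):=\int_{\mathbb{R}^n}u(t,x)\varphi(t,x)\,\mathrm{d}x\geq c_0>0 \qquad \text{for } t\geq L_0:=R,$$
together with the auxiliary $\int_{\mathbb{R}^n}u(t,x)\,\mathrm{d}x\gtrsim 1$ obtained by testing against the constant function $1$.

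Second, writing $V(t):=\int_{|x|\leq t+R}\varphi(t,x)\,\mathrm{d}x\simeq (1+t)^{(n-1)/2}$ and $\bar u(t):=F(t)/V(t)$, I would combine Jensen's inequality for the convex $g$ against the probability measure $\varphi\,\mathrm{d}x/V(t)$ with the H\"older bound $\int g(u)\varphi\,\mathrm{d}x\leq\bigl(\int g(u)^{p_S(n)}\varphi\,\mathrm{d}x\bigr)^{1/p_S(n)}V(t)^{(p_S(n)-1)/p_S(n)}$ to obtain
$$\int_{\mathbb{R}^n}|u|^{p_S(n)}\mu(|u|)\varphi\,\mathrm{d}x\geq V(t)\,\bar u(t)^{p_S(n)}\mu\bigl(\bar u(t)\bigr).$$
This is the central nonlinear estimate: it converts a weighted $L^{p_S(n)}$-norm of $u$ into a pointwise expression in the averaged quantity $\bar u(t)$ and lets $\mu$ itself feed back into the iteration.

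Third, I would set up a slicing iteration. Choose $\ell_j:=1+2^{-j}$ and $L_j:=R\prod_{k=0}^{j}\ell_k\nearrow L_\infty<+\infty$, and prove by induction that, for each $j\geq 0$ and $t\geq L_j$,
$$F(t)\geq D_j\,(1+t)^{-\alpha_j}\Bigl(\log\tfrac{t}{L_j}\Bigr)^{\beta_j},$$
with $\alpha_j$ staying bounded (converging to some $\alpha_\infty$), $\beta_j$ growing linearly in $j$, and $D_j$ obeying a recursion $D_{j+1}\geq c_\ast\,C_{\mathrm{Str}}^{\,p_S(n)-1}\,D_j^{\,p_S(n)}/K^j$ for some $K>1$ independent of $j$. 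The induction step is carried out by feeding the $j$-th lower bound for $F$ into the Jensen--H\"older inequality above, then into the integral representation associated with $\varphi$, integrating twice on the slice $[L_{j+1},t]$ (where the slicing collapses the higher-order $\log$ powers produced by iteration into harmless bounded constants), and invoking \eqref{Assumption-Blow-up} in the form $\mu(\bar u)\geq\tfrac{1}{2}C_{\mathrm{Str}}(\log(1/\bar u))^{-1/p_S(n)}$ valid whenever $\bar u$ is sufficiently small. Passing to logarithms in the $D_j$-recursion, $\log D_j\to+\infty$ as $j\to+\infty$ provided $c_l$ is taken large enough to dominate the geometric penalty coming from $K^j$; evaluating at any fixed $t_\ast>L_\infty$ then forces $F(t_\ast)=+\infty$, contradicting $u\in\mathcal{C}([0,T),H^1)$ and giving $T<+\infty$.

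The main technical obstacle is the simultaneous book-keeping of the triple $(\alpha_j,\beta_j,D_j)$ and the quantitative determination of $c_l$: the exponent $\beta_j$ has to be tuned so that raising it to the $p_S(n)$-th power in the iteration matches the $\log t$ factor produced by the double integration of the test-function representation, while the threshold $c_l\gg 1$ must be fixed large enough that the geometric penalty $K^j$ is swamped by the $C_{\mathrm{Str}}^{\,p_S(n)-1}$-gain in $D_j$. These balance identities are the delicate technical core of the argument and also explain why a largeness condition on $c_l$ is unavoidable in \eqref{Assumption-Blow-up}.
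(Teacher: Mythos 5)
Your overall strategy is recognizable and partly matches the paper: a Jensen--H\"older step exploiting the convexity of $g(\tau)=\tau[\mu(|\tau|)]^{1/p_S(n)}$, a slicing procedure in the spirit of Agemi--Kurokawa--Takamura, and the assumption \eqref{Assumption-Blow-up} entering through $\mu(\tau)\gtrsim C_{\mathrm{Str}}(\log\tfrac1\tau)^{-1/p_S(n)}$. However, there is a genuine gap in the choice of iterate, and it is not cosmetic. You propose to run the iteration on the Yordanov--Zhang functional $F(t)=\int u\,\varphi\,\mathrm{d}x$ with $\varphi=e^{-t}\psi$, and to "integrate twice the representation associated with $\varphi$". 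But this functional obeys $F''+2F'=N(t)$ with $N(t)=\int|u|^{p_S(n)}\mu(|u|)\varphi\,\mathrm{d}x$, whose Duhamel kernel is $\tfrac12\bigl(1-e^{-2(t-s)}\bigr)$, which is \emph{bounded} for $t-s\geqslant 1$. There is no genuine $(t-s)$ factor, so after feeding in the lower bound $N(s)\gtrsim F(s)^{p_S}V(s)^{1-p_S}\mu(F(s)/V(s))$ with $V(s)\simeq(1+s)^{(n-1)/2}$, the resulting integral $\int N(s)\,\mathrm{d}s$ has the exponent $-\tfrac{(n-1)(p_S-1)}{2}$ on $(1+s)$, which is strictly less than $-1$ (e.g.\ $\approx-\sqrt2$ for $n=3$), so the integral converges and you obtain no logarithmic gain per step. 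The power counting for the slicing iteration therefore does not close for the functional you chose.

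The paper circumvents exactly this difficulty by iterating not on $\int u\varphi$ but on the modified quantity $\mathcal{U}(t)=\int u\,[\mu(|u|)]^{1/p_S(n)}\eta_q(t,t,x)\,\mathrm{d}x$ built from the Wakasa--Yordanov auxiliary function $\eta_q$ of \eqref{def-eta} with $q=\tfrac{n-1}{2}-\tfrac{1}{p_S(n)}$. Proposition \ref{Prop-lower-bounds-critical-case} gives an integral identity in which the source appears with a genuine factor $(t-s)\,\eta_q(t,s,x)$, and the precise choice of $q$, combined with the scaling of $\eta_q$ in Lemma \ref{lemma eta and xi estimates}, turns the $s$-exponent into exactly $-1$ and the $t$-exponent into exactly $-1$ in the iteration frame \eqref{Iteration-Frame}. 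This is the structural device that produces the $\log$-gain that your $F$-iteration lacks. Two secondary discrepancies follow from the same source: your claim that $\beta_j$ grows linearly and $\alpha_j$ stays bounded does not match the critical-case iteration, where the logarithmic exponents grow geometrically (in the paper, $a_j\sim\tfrac{p_S(n)}{p_S(n)-1}p_S^j(n)$ and $b_j=p_S^j(n)-1$); and your recursion on $D_j$ carries a fixed power $C_{\mathrm{Str}}^{p_S-1}$, whereas in the paper's scheme the modulus-of-continuity factor appears with a geometrically growing exponent $\sigma_j\sim\tfrac{p_S(n)}{p_S(n)-1}p_S^j(n)$, which is essential for the final verification of \eqref{Verify-Chen} uniformly in $j$. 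To repair your argument you would need either to pass to a functional that produces the $(t-s)$ kernel (testing with $\eta_q$ as the paper does, or at minimum with $\phi=1$ \`a la Kato--Zhou and then inserting the Yordanov--Zhang lower bound for the nonlinear integral), and redo the exponent book-keeping with geometric rather than linear growth.
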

\begin{exam}\label{Example-blow-up}
The hypothesis \eqref{Assumption-Blow-up} and the supposed property for the function $g=g(\tau)$ of Theorem \ref{Thm-Blow-up} hold for the following functions $\mu=\mu(\tau)$ on a small interval $[0,\tau_0]$ with $0<\tau_0\ll 1$:
\begin{itemize}
	\item $\mu(0)=0$ and $\mu(\tau)=(\log\frac{1}{\tau})^{-\gamma}$ with $0<\gamma<\frac{1}{p_S(n)}$;
	\item $\mu(0)=0$ and $\mu(\tau)=c_l(\log\frac{1}{\tau})^{-\frac{1}{p_S(n)}}$ with $c_l\gg1$;
	\item $\mu(0)=0$ and $\mu(\tau)=(\log\frac{1}{\tau})^{-\frac{1}{p_S(n)}}(\log^k\frac{1}{\tau})^{\gamma}$ with $\gamma>0$ and $k\geqslant 2$, here $\log^k$
denotes the iterated logarithm ($k$ times application).
\end{itemize}
 Note that the modulus of continuity in the last cases can be continued to $\tau\in[0,+\infty)$ in such a way that $\mu=\mu(\tau)$ is a continuous, concave and increasing function, for example, a smooth and concave continuation function with $\mu(0)=0$ such that
\begin{align*}
\mu(\tau)=\begin{cases}
	c_l(\log\frac{1}{\tau})^{-\gamma}&\mbox{when}\ \ \tau\in(0,\frac{1}{3}],\\
	\mbox{strictly increasing}&\mbox{when}\ \ \tau\in[\frac{1}{3},3],\\
	c_l(\log \tau)^{\gamma}&\mbox{when}\ \ \tau\in[3,+\infty),
\end{cases}
\end{align*}
with   a suitably large constant $c_l\gg1$ and  $0<\gamma\leqslant\frac{1}{p_S(n)}$. A counterexample for the condition \eqref{Assumption-Blow-up} in Theorem \ref{Thm-Blow-up} is $\mu(\tau)=\tau^{\nu}$ with $\nu>0$. This is not surprising due to the global (in time) existence results \cite{John=1979,Glassey=1981-GESDS,Lindblad-Sogge=1995,Georgiev-Lindblad=1997,Tataru=2001} for the  semilinear classical wave equations \eqref{Semilinear-Wave} with power nonlinearity $|u|^{p_S(n)+\nu}$.
\end{exam}
%
\begin{remark}\label{Remark2.1}
Concerning the semilinear wave equation \eqref{Semilinear-Wave} with the critical exponent $p=p_S(n)$, by taking the additional term of modulus of continuity $\mu(|u|)$  fulfilling \eqref{Assumption-Blow-up} in the nonlinearity, Theorem \ref{Thm-Blow-up} shows that the energy solutions still blow up in finite time.
\end{remark}

To indicate the sharpness of the condition \eqref{Assumption-Blow-up}, we next study the three dimensional semilinear Cauchy problem \eqref{Semilinear-Wave-Modulus} with a modulus of continuity satisfying \eqref{GESDS-Condition}. Before showing our result, taking $r=|x|$, let us introduce a definition of radial solutions to our aim model in three dimensions, namely,
\begin{align}\label{G0}
	\begin{cases}
		\displaystyle{u_{tt}-u_{rr}-\frac{2}{r}u_r=|u|^{p_S(3)}\mu(|u|),}&r>0,\ t>0,\\
		u(0,r)=u_0(r),\ \ u_t(0,r)=u_1(r),&r>0.
	\end{cases}
\end{align}
\begin{defn}\label{Defn-mild-solution}
	The function $u=u(t,r)$ is called a global (in time) mild solution to the semilinear Cauchy problem \eqref{G0} if $u\in\ml{C}([0,+\infty)\times\mb{R}_+)$ carrying its initial data, and satisfying the following integral equality:
	\begin{align*}
	u(t,r)=\ml{E}_0(t,r)\ast_{(r)} u_0(r)+\ml{E}_1(t,r)\ast_{(r)} u_1(r)+\int_0^t\ml{E}_1(t-s,r)\ast_{(r)}\big[|u(s,r)|^{p_S(3)}\mu\big(|u(s,r)|\big)\big]\mathrm{d}s.
	\end{align*}
In the above, $\ml{E}_0=\ml{E}_0(t,r)$ and $\ml{E}_1=\ml{E}_1(t,r)$ are the fundamental solutions to the corresponding linear Cauchy problem to \eqref{G0}
with vanishing right-hand side.
\end{defn}
\noindent We turn to the global (in time) existence of radial solutions in the subsequent theorem.
\begin{theorem}\label{Thm-GESDS}
Let us consider a modulus of continuity $\mu=\mu(\tau)$ with $\mu(0)=0$ satisfying
\begin{align}
\lim_{\tau \to 0^+}\mu(\tau)\left(\log\frac{1}{\tau}\right)^{\frac{1}{p_S(3)}}=0,\label{GESDS-Condition}
\end{align}
furthermore,
\begin{align}
\mu(\tau)\left(\log\frac{1}{\tau}\right)^{\frac{1}{p_S(3)}}\lesssim \left(\log\log\frac{1}{\tau}\right)^{-1}\ \ \mbox{when}\ \ \tau\in(0,\tau_0].\label{Speical-mu}
\end{align}
Let $\bar{u}_0\in\ml{C}_0^2$ and $\bar{u}_1\in\ml{C}_0^1$ be radial. Then, there exists $0<\varepsilon_0\ll 1$ such that for any $\varepsilon\in(0,\varepsilon_0)$, if $u_0=\varepsilon\bar{u}_0$ and $u_1=\varepsilon\bar{u}_1$, then the semilinear Cauchy problem \eqref{Semilinear-Wave-Modulus}  for $n=3$ admits a uniquely determined global (in time) small data radial solution in the sense of Definition \ref{Defn-mild-solution} such that $u\in\ml{C}([0,+\infty)\times\mb{R}^3)$.
\end{theorem}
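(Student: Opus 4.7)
The plan is to reduce the three-dimensional radial problem to a one-dimensional wave equation via the classical substitution $v(t,r):=r\,u(t,r)$, which satisfies $v_{tt}-v_{rr}=r|u|^{p_S(3)}\mu(|u|)$ on $\{r>0,\,t>0\}$ together with $v(t,0)=0$. Extending $v$ and the source as odd functions of $r$, d'Alembert's formula yields an explicit linear-plus-Duhamel representation for $u=v/r$. Standard bounds for the linear evolution with radial data $\bar u_0\in\ml{C}_0^2$, $\bar u_1\in\ml{C}_0^1$ produce the sharp pointwise decay $|u_L(t,r)|\lesssim\varepsilon\langle t+r\rangle^{-1}\langle t-r\rangle^{-(p_S(3)-1)}$, which motivates working in the polynomially weighted Banach space
$$X:=\Bigl\{u\in\ml{C}\bigl([0,\infty)\times[0,\infty)\bigr)\colon \|u\|_X:=\sup_{t,r\geqslant 0}\langle t+r\rangle\langle t-r\rangle^{p_S(3)-1}|u(t,r)|<\infty\Bigr\}.$$

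Inside $X$, I would run a Banach contraction argument on the integral formulation in Definition \ref{Defn-mild-solution}. The crux is a polynomial-logarithmic weighted $L^\infty_tL^\infty_r$ estimate for the radial Duhamel operator $N$: for a non-negative source $F=F(s,\rho)$, the weighted value $\langle t+r\rangle\langle t-r\rangle^{p_S(3)-1}|(NF)(t,r)|$ is controlled by an integral of $\rho\,F(s,\rho)$ over the past characteristic strip $\{|r-(t-s)|\leqslant\rho\leqslant r+(t-s),\;0\leqslant s\leqslant t\}$ equipped with matching polynomial weights in $\langle s\pm\rho\rangle$. The estimate follows by slicing the strip along the characteristics $\rho=s\pm(t\mp r)$ and treating each sub-integral in the spirit of John, Kubo, and Agemi--Kurokawa--Takamura; at the critical exponent $p=p_S(3)$ one encounters a residual logarithmic factor of the form $[\log\langle t+r\rangle]^{1/p_S(3)}$, which is the expected critical-case divergence.

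This is exactly where the modulus of continuity rescues the argument: for $u\in X$ with $\|u\|_X$ small, the a priori bound $|u(s,\rho)|\leqslant\|u\|_X\langle s+\rho\rangle^{-1}\langle s-\rho\rangle^{-(p_S(3)-1)}$ forces $|u|$ to be uniformly small, so $\log(1/|u(s,\rho)|)\gtrsim\log\langle s+\rho\rangle$, and hypothesis \eqref{Speical-mu} yields
$$\mu(|u(s,\rho)|)\lesssim\bigl[\log\langle s+\rho\rangle\bigr]^{-1/p_S(3)}\bigl[\log\log\langle s+\rho\rangle\bigr]^{-1}.$$
The first factor cancels the $[\log]^{1/p_S(3)}$ divergence exactly on the nose, while the residual $[\log\log]^{-1}$ renders the remaining temporal integral convergent. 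Assembling these bounds I expect a self-map inequality $\|Nu\|_X\leqslant C_0\varepsilon+C_1\|u\|_X^{p_S(3)}$; an analogous Lipschitz estimate, using concavity and monotonicity of $\mu$ to control $|\mu(|u|)-\mu(|\tilde u|)|$ by a modulus-weighted difference, gives the contraction on a closed ball of radius $\sim\varepsilon$. Banach's fixed point theorem then produces a unique $u\in X$, which by construction is the sought global radial mild solution in the sense of Definition \ref{Defn-mild-solution}.

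The main obstacle is the sharp, constant-precise bookkeeping of the weighted pointwise estimate for the radial Duhamel operator at $p=p_S(3)$: the exponent $1/p_S(3)$ of the logarithmic loss has to be tracked exactly, since it is cancelled precisely by the matching power coming from $\mu$ through \eqref{Speical-mu}. A secondary difficulty is that \eqref{Speical-mu} is only an asymptotic statement near $\tau=0$, so one must choose $\varepsilon_0$ small enough that the $X$-bound confines $|u(s,\rho)|$ into $(0,\tau_0]$ uniformly along the entire past light cone, and propagate this smallness stably through the fixed-point iteration.
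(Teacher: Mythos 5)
Your high-level plan — radial reduction via $v=ru$, d'Alembert plus Duhamel, and a Banach fixed point argument in a polynomially weighted $L^\infty_tL^\infty_r$ space whose critical logarithmic loss is absorbed by the extra factor $\mu(|u|)$ — is the same strategy the paper uses. However, your choice of weight exponent on $\langle t-r\rangle$ contains a genuine error that would make the nonlinear estimate fail by a full power of $t+r$, not merely by a logarithm, and the modulus of continuity can only repair logarithms.

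You propose the norm $\sup\langle t+r\rangle\langle t-r\rangle^{p_S(3)-1}|u|$. The paper instead uses $\kappa-1=\tfrac{1}{p_S(3)}=p_S(3)-2$, which is the Asakura/Kubo exponent for the $3$D model $u_{tt}-\Delta u=|u|^p$ at $p=p_S(3)$; note $p_S(3)-1=\sqrt{2}$ while $p_S(3)-2=\sqrt{2}-1$, so your exponent is off by exactly one. Why this matters: after the change of variables $\xi=s+\rho$, $\eta=\rho-s$, the source coming from $u\in X$ has light-cone decay $\langle\eta\rangle^{-(\kappa-1)p_S(3)}$. With $\kappa-1=\tfrac{1}{p_S(3)}$ this is exactly $\langle\eta\rangle^{-1}$, the critical rate, and the $\eta$-integration produces the logarithmic loss that $\mu$ then cancels (together with a $\log\log$ factor controlled by hypothesis \eqref{Speical-mu}). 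With your $\kappa-1=p_S(3)-1$, the source light-cone decay is $\langle\eta\rangle^{-(p_S(3)+1)}$, which is integrable and produces no log at all; but in the weighted reconstruction the Duhamel output then only achieves $\langle t-r\rangle^{-1}$ decay in the light-cone direction, whereas your norm demands $\langle t-r\rangle^{-(p_S(3)-1)}$ — a polynomial mismatch that $\mu$ cannot fix. Concretely, in the region $t\geqslant 2r$ the weighted Duhamel term grows like $\langle t+r\rangle$ with your exponent (versus merely logarithmically with the correct one), so the self-map inequality $\|Nu\|_X\lesssim\|u\|_X^{p_S(3)}$ breaks down.

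Two further remarks. First, the claimed linear decay $|u_L|\lesssim\langle t+r\rangle^{-1}\langle t-r\rangle^{-(p_S(3)-1)}$ is misleading as motivation: for compactly supported data the linear radial solution is supported in a shell $|t-r|\leqslant R$, so any exponent on $\langle t-r\rangle$ holds trivially; the exponent is fixed by the nonlinear estimate, not the linear one. Second, the paper actually builds the logarithmic factor $\omega(\langle t-|r|\rangle)=[\log\langle t-|r|\rangle]^{1/p_S(3)}$ directly into the norm $X_\kappa$, rather than tracking a residual log externally as you suggest; this makes the self-map estimate clean and, crucially, isolates the role of \eqref{Speical-mu}: the integral $\int\langle\eta\rangle^{-1}[\log\langle\eta\rangle]^{-1}\mathrm{d}\eta$ produces a $\log\log$ factor that is killed precisely by the bound $\bar\kappa(\tau)\log\log\tfrac{1}{\tau}\lesssim 1$. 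Your sketch gestures at $[\log\log]^{-1}$ but does not explain where the offending $\log\log$ arises, and without putting $\omega$ into the norm the bookkeeping is substantially more delicate.
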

\begin{remark}
Since the assumption \eqref{Speical-mu} implies \eqref{GESDS-Condition} as $\tau\to 0^+$, one may drop the condition \eqref{GESDS-Condition} directly. Nevertheless, to emphasize the importance of the essential condition \eqref{GESDS-Condition} for the  global (in time) existence result, we retain this condition. We conjecture that the logarithmic type decay condition \eqref{Speical-mu} is a technical restriction.
\end{remark}
\begin{exam}\label{Example-GESDS}
The hypotheses \eqref{GESDS-Condition} and \eqref{Speical-mu} hold for the following functions $\mu=\mu(\tau)$ on a small interval $[0,\tau_0]$ with $0<\tau_0\ll 1$:
\begin{itemize}
	\item $\mu(\tau)=\tau^{\gamma}$ with $\gamma\in(0,1]$;
	\item $\mu(\tau)=[\log(1+\tau)]^{\gamma}$ with $\gamma\in(0,1]$;
	\item $\mu(0)=0$ and $\mu(\tau)=(\log\frac{1}{\tau})^{-\gamma}$ with $\gamma>\frac{1}{p_S(3)}$;
	\item $\mu(0)=0$ and $\mu(\tau)=(\log\frac{1}{\tau})^{-\frac{1}{p_S(3)}}(\log\log\frac{1}{\tau})^{\gamma}$ with $\gamma\leqslant -1$;
	\item $\mu(0)=0$ and $\mu(\tau)=(\log\frac{1}{\tau})^{-\frac{1}{p_S(3)}}(\log\log\frac{1}{\tau})^{-1}(\log^k\frac{1}{\tau})^{\gamma}$ with $\gamma<0$ and $k\geqslant 3$.
\end{itemize}
\end{exam}

\begin{remark} \label{Remark2.3}
By assuming additionally decay properties for initial data with respect to the radial behavior, we also can derive some pointwise decay estimates for the global (in time) radial solutions. More details will be given in Corollary \ref{Coro-Decay} and in our proof in Section \ref{Section-GESDS}.
\end{remark}
\begin{remark} \label{Remark2.4}
The key tool to prove Theorem \ref{Thm-GESDS} is to derive polynomial-logarithmic type weighted $L^{\infty}_tL^{\infty}_r$ estimates. Concerning higher dimensional cases, one may recall more general representations of radial solutions to the linear wave equation associated with polynomial type weighted $L^{\infty}_tL^{\infty}_r$ estimates (see \cite{Kubo=1994,Kubo-Kubota=1995} for odd dimensions and \cite{Kubo-Kubota=1998} for even dimensions). Furthermore, by setting suitable logarithmic factors to be the additional part of weighted functions, one may derive some weighted $L^{\infty}_tL^{\infty}_r$ estimates to get a global (in time) existence result for higher dimensions $n$, nevertheless, this purpose is beyond the scope of this manuscript.
\end{remark}

\begin{remark} \label{Remark2.2}
	Let us summarize the given results in Theorems \ref{Thm-Blow-up} and \ref{Thm-GESDS}. We recall the typical modulus of continuity proposed in Examples \ref{Example-blow-up} and \ref{Example-GESDS}. In the consideration of semilinear wave equations \eqref{Semilinear-Wave-Modulus}  for $n=3$ with the modulus of continuity satisfying \eqref{Int-example}, we may conclude that the critical regularity of nonlinearities is described by the threshold $\gamma=\frac{1}{p_S(3)}$. This is one of the main contributions of this paper and it answers the open question proposed in the introduction.
\end{remark}

\begin{remark}\label{Rem-GESDS-CONjecture}
Motivated by the global (in time) existence condition \eqref{GESDS-Condition} as well as the blow-up condition \eqref{Assumption-Blow-up}, one may introduce the following possible quantity:
\begin{align}\label{Conjecture-Chen-Reissig}
	0\leqslant C_{\mathrm{Str}}:=\lim_{\tau \to 0^+}\mu(\tau)\left(\log\frac{1}{\tau}\right)^{\frac{1}{p_S(n)}},
\end{align}
to describe the critical regularity of nonlinearities for semilinear wave equations \eqref{Semilinear-Wave-Modulus}. The blow-up phenomenon occurs when $C_{\mathrm{Str}}\in[c_l,+\infty]$ in Theorem \ref{Thm-Blow-up} and the global (in time) existence result holds when $C_{\mathrm{Str}}=0$ in Theorem \ref{Thm-GESDS}. Explanations more in detail will be provided in Section \ref{Section-fINAL}.
\end{remark}

\section{Blow-up of energy solutions}\label{Section-Blow-up}
\setcounter{equation}{0}
$\ \ \ \ $This section is organized as follows. In Subsection \ref{Sub-3.1}, we will introduce a test function, and derive sharp estimates for it in $L^1(B_{R+t}(0))$. Then, thanks to some estimates for auxiliary functions, the iteration frame and lower bound estimates for a time-dependent functional will be established in Subsections \ref{Sub-3.2} and \ref{Sub-3.3}, respectively. Finally, in Subsection \ref{Sub-3.4}, we will demonstrate the lower bound of this functional blows up in finite time by using iteration methods with slicing procedure.
\subsection{Preliminaries and auxiliary functions}\label{Sub-3.1}
$\ \ \ \ $Let us set a non-negative parameter
\begin{align}\label{parameter-r}
q:=\frac{n-1}{2}-\frac{1}{p_S(n)} \ \ \mbox{for}\ \ n\geqslant 2.
\end{align}
 Next, we recall the following pair of auxiliary functions from \cite{Wakasa-Yordanov=2019}:
\begin{align}
	\xi_q(t,x) & :=  \int_0^{\lambda_0} \mathrm{e}^{-\lambda(R+t)} \cosh (\lambda t)  \Phi(\lambda x)  \lambda^q  \mathrm{d}\lambda,\label{def-xi}\\
	\eta_q(t,s,x) & :=  \int_0^{\lambda_0} \mathrm{e}^{-\lambda(R+t)} \frac{\sinh (\lambda (t-s))}{\lambda(t-s)} \Phi(\lambda x) \lambda^q  \mathrm{d}\lambda, \label{def-eta}
\end{align} where $\lambda_0$ is a fixed positive parameter and the test function $\Phi=\Phi(x)$ defined by \begin{align*}
\Phi: x \in \mathbb{R}^n \to	\Phi(x) :=
	\int_{\mathbb{S}^{n-1}} \mathrm{e}^{x\cdot \omega}\mathrm{d} \sigma_\omega\ \ \mbox{for}\ \ n\geqslant 2,
\end{align*}
was introduced by \cite{Yordanov-Zhang=2006}.  The test function $\Phi$ is positive, smooth, and satisfies $\Delta \Phi =\Phi$ with
\begin{align}
	 \Phi (x) \simeq |x|^{-\frac{n-1}{2}}  \mathrm{e}^{|x|} \quad \mbox{as} \quad |x|\to +\infty.\label{Asymp}
\end{align}
By introducing the function with separate variables
\begin{align*}
\Psi(t,x):=\mathrm{e}^{-t}\,\Phi(x),
\end{align*}
 it is the solution to the free wave equation $\Psi_{tt}-\Delta\Psi=0$ and has the next property.
\begin{lemma}\label{Lemma-Optimal-Est}The test function fulfills the sharp estimates
\begin{align*}
\int_{B_{R+t}(0)}\Psi(t,x)\mathrm{d}x\simeq(R+t)^{\frac{n-1}{2}}
\end{align*}
for any $t\geqslant0$ and $n\geqslant 2$.
\end{lemma}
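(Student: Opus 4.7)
The plan is to evaluate $\int_{B_{R+t}(0)}\Psi(t,x)\,\mathrm{d}x = \mathrm{e}^{-t}\int_{B_{R+t}(0)}\Phi(x)\,\mathrm{d}x$ by passing to polar coordinates and then extracting the leading behavior from the known asymptotic \eqref{Asymp}. Since $\Phi(x)$ depends only on $|x|$ (the defining integral over $\mb{S}^{n-1}$ is rotation-invariant), writing $\rho=|x|$ one has
\[\int_{B_{R+t}(0)}\Phi(x)\,\mathrm{d}x = |\mb{S}^{n-1}|\int_0^{R+t}\Phi(\rho)\,\rho^{n-1}\,\mathrm{d}\rho,\]
so the task reduces to estimating a one-dimensional integral.

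First, I would treat the region $\rho\in[0,1]$ separately: there $\Phi$ is continuous and positive, contributing a bounded positive constant that is negligible in the regime of large $R+t$. For $\rho\geq 1$, the asymptotic \eqref{Asymp} provides two-sided bounds $c_1\rho^{-(n-1)/2}\mathrm{e}^{\rho}\leqslant\Phi(\rho)\leqslant c_2\rho^{-(n-1)/2}\mathrm{e}^{\rho}$ (after a suitably large cutoff, which we may take to be $1$ by adjusting constants), so $\Phi(\rho)\rho^{n-1}\simeq\rho^{(n-1)/2}\mathrm{e}^{\rho}$.

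The core analytic step is then the Laplace-type relation
\[\int_1^{M}\rho^{(n-1)/2}\mathrm{e}^{\rho}\,\mathrm{d}\rho \simeq M^{(n-1)/2}\mathrm{e}^{M}\qquad\text{as}\qquad M\to+\infty.\]
The upper bound follows from a single integration by parts and iterating $\lceil(n-1)/2\rceil+1$ times to reduce the exponent, with remainders absorbed into the leading term. The lower bound is obtained by restricting the integration to $[M-1,M]$, on which the integrand is bounded below by $c\,M^{(n-1)/2}\mathrm{e}^{M-1}$.

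Combining, one gets $\int_{B_{R+t}(0)}\Phi(x)\,\mathrm{d}x \simeq (R+t)^{(n-1)/2}\mathrm{e}^{R+t}$ once $R+t$ is sufficiently large, so that multiplication by $\mathrm{e}^{-t}$ gives the claimed equivalence up to the fixed factor $\mathrm{e}^{R}$, which is absorbed into the implicit constants. The remaining case of $t$ in a bounded range is handled by compactness: both $\int_{B_{R+t}(0)}\Psi(t,x)\,\mathrm{d}x$ and $(R+t)^{(n-1)/2}$ are continuous and strictly positive functions of $t\in[0,T_0]$ for any fixed $T_0>0$, so they are comparable there as well. The only real (minor) difficulty is producing a matching lower bound in the Laplace-type asymptotic, which as indicated requires localizing to a length-one window near the upper endpoint rather than relying on the global integration-by-parts expansion.
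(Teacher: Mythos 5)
Your proposal is correct and follows the same basic strategy as the paper: use \eqref{Asymp} and spherical symmetry to reduce to the one-dimensional Laplace-type integral $\mathrm{e}^{-t}\int_0^{R+t}\zeta^{(n-1)/2}\mathrm{e}^{\zeta}\,\mathrm{d}\zeta$, then show this is $\simeq (R+t)^{(n-1)/2}$. The small differences are in the details of both bounds. For the upper bound, you propose iterating integration by parts $\lceil(n-1)/2\rceil+1$ times, which works but is more than needed: after a \emph{single} integration by parts the remainder term appears with a \emph{minus} sign, namely
\begin{align*}
\mathrm{e}^{-t}\int_0^{R+t}\zeta^{\frac{n-1}{2}}\mathrm{e}^{\zeta}\,\mathrm{d}\zeta=(R+t)^{\frac{n-1}{2}}\mathrm{e}^{R}-\tfrac{n-1}{2}\,\mathrm{e}^{-t}\int_0^{R+t}\zeta^{\frac{n-3}{2}}\mathrm{e}^{\zeta}\,\mathrm{d}\zeta,
\end{align*}
and since $\tfrac{n-1}{2}>0$ the subtracted integral is nonnegative, so the upper bound is immediate with no iteration. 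For the lower bound, you restrict to the length-one window $[R+t-1,R+t]$; the paper instead restricts to $[t,R+t]$, where the combined factor $\mathrm{e}^{-t}\mathrm{e}^{\zeta}\geqslant 1$ can simply be dropped, reducing to the exact polynomial integral $\int_t^{R+t}\zeta^{(n-1)/2}\,\mathrm{d}\zeta\gtrsim (R+t)^{(n-1)/2}$. Both localizations work; the paper's choice is cleaner here because it exploits the presence of the exponential weight $\mathrm{e}^{-t}$ rather than fighting against it. Your additional compactness argument for bounded $t$ is sound but not really necessary, since both the paper's upper and lower bounds hold verbatim for all $t\geqslant 0$ once one notes that the contribution from a fixed ball near the origin (where \eqref{Asymp} is only an asymptotic) is a bounded positive constant that is absorbed into the implicit constants for every $R+t\geqslant R>0$.
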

\begin{proof}
By using integration by parts, we arrive at
\begin{align*}
\mathrm{e}^{-t}\int_0^{R+t}\zeta^{\frac{n-1}{2}}\mathrm{e}^\zeta\mathrm{d}\zeta&=(R+t)^{\frac{n-1}{2}}\mathrm{e}^R-\frac{n-1}{2}\mathrm{e}^{-t}\int_0^{R+t}\zeta^{\frac{n-3}{2}}\mathrm{e}^\zeta\mathrm{d}\zeta\lesssim (R+t)^{\frac{n-1}{2}}.
\end{align*}
Shrinking the domain of integration to $[t,R+t]$, one notices
\begin{align*}
	\mathrm{e}^{-t}\int_0^{R+t}\zeta^{\frac{n-1}{2}}\mathrm{e}^\zeta\mathrm{d}\zeta\geqslant \int_t^{R+t}\zeta^{\frac{n-1}{2}}\mathrm{d}\zeta=\frac{2}{n+1}\left((R+t)^{\frac{n+1}{2}}-t^{\frac{n+1}{2}} \right)\gtrsim(R+t)^{\frac{n-1}{2}}.
\end{align*}
Therefore, the previous sharp estimates imply
\begin{align*}
	 \int_{B_{R+t}(0)}\Psi(t,x)\mathrm{d}x\simeq\int_{B_{R+t}(0)}|x|^{-\frac{n-1}{2}}\mathrm{e}^{|x|-t}\mathrm{d}x\simeq\mathrm{e}^{-t}\int_0^{R+t}\zeta^{\frac{n-1}{2}}\mathrm{e}^\zeta\mathrm{d}\zeta\simeq(R+t)^{\frac{n-1}{2}}
\end{align*}
because of \eqref{Asymp}. The proof is completed.
\end{proof}

Additionally, some useful estimates of $\xi_q$ and $\eta_q$ are stated in the following lemma, whose proof can be found in \cite[Lemma 3.1]{Wakasa-Yordanov=2019}. Note that our setting of $q$ fulfills all assumptions in Lemma \ref{lemma eta and xi estimates}. Moreover, we recall the notation $\langle y \rangle =3+|y|$.
\begin{lemma} \label{lemma eta and xi estimates} There exists $\lambda_0>0$ such that the following properties hold for $n\geqslant 2$:
	\begin{itemize}
		\item[\rm{(i)}] if $q>-1$, $|x|\leqslant R$ and $t\geqslant 0$, then
		\begin{align*}
			\xi_q(t,x) & \geqslant A_0, \\
			\eta_q(t,0,x) & \geqslant B_0 \langle t\rangle^{-1};
		\end{align*}
		\item[\rm{(ii)}] if $q>-1$, $|x|\leqslant R+s$ and $t>s\geqslant 0$, then
		\begin{align*}
			\eta_q(t,s,x) & \geqslant B_1 \langle t\rangle^{-1} \langle s\rangle^{-q};
		\end{align*}
		\item[\rm{(iii)}] if $q>\frac{n-3}{2}$, $|x|\leqslant R+t$ and $t> 0$, then
		\begin{align*}
			\eta_q(t,t,x) & \leqslant B_2 \langle t\rangle^{-\frac{n-1}{2}} \langle t-|x| \rangle^{\frac{n-3}{2}-q}.
		\end{align*}
	\end{itemize}
	Here, $A_0$ and $B_k$, with $k=0,1,2$, are positive constants depending only on $\lambda_0$, $q$ and $R$.
\end{lemma}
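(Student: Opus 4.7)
The approach is to exploit the exponential structure of $\sinh$ and $\cosh$ together with the two-sided asymptotic $\Phi(y)\simeq |y|^{-(n-1)/2}\mathrm{e}^{|y|}$ recorded in \eqref{Asymp} (supplemented by continuity and positivity of $\Phi$ on compact sets). The three estimates will then follow by rewriting the $\lambda$-integrals on $[0,\lambda_0]$ in a gamma-type form, via judicious changes of variable that extract the polynomial weights $\langle t\rangle^{-1}$, $\langle s\rangle^{-q}$, and $\langle t-|x|\rangle^{(n-3)/2-q}$.

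For parts (i) and (ii), the first step is the identity
\begin{equation*}
\mathrm{e}^{-\lambda(R+t)}\sinh\bigl(\lambda(t-s)\bigr)=\tfrac{1}{2}\,\mathrm{e}^{-\lambda(R+s)}\bigl(1-\mathrm{e}^{-2\lambda(t-s)}\bigr),
\end{equation*}
which transforms $\eta_q$ into
\begin{equation*}
\eta_q(t,s,x)=\frac{1}{2(t-s)}\int_0^{\lambda_0}\frac{1-\mathrm{e}^{-2\lambda(t-s)}}{\lambda}\,\mathrm{e}^{-\lambda(R+s)}\,\Phi(\lambda x)\,\lambda^q\,\mathrm{d}\lambda,
\end{equation*}
and a similar manipulation $\mathrm{e}^{-\lambda(R+t)}\cosh(\lambda t)=\tfrac12(\mathrm{e}^{-\lambda R}+\mathrm{e}^{-\lambda(R+2t)})$ handles $\xi_q$. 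For (i), I would use that $|x|\leqslant R$ keeps $\lambda x$ in a fixed compact set on which $\Phi$ is uniformly bounded below, then split the cases $t\leqslant 1$ (where $(1-\mathrm{e}^{-2\lambda t})/\lambda\geqslant c>0$ suffices) and $t\geqslant 1$ (where $1-\mathrm{e}^{-2\lambda t}\geqslant 1-\mathrm{e}^{-2\lambda}$ is uniformly positive on $[0,\lambda_0]$), so that the prefactor $1/t$ produces $\langle t\rangle^{-1}$. For (ii), the extra $\langle s\rangle^{-q}$ is produced by feeding in the pointwise lower bound $\Phi(\lambda x)\gtrsim(\lambda\langle s\rangle)^{-(n-1)/2}\mathrm{e}^{\lambda|x|}$ when $\lambda\langle s\rangle\gtrsim 1$, using $\mathrm{e}^{-\lambda(R+s)}\mathrm{e}^{\lambda|x|}\leqslant 1$ under $|x|\leqslant R+s$, and substituting $\mu=\lambda\langle s\rangle$; the prefactor $1/(t-s)$ combines with $t>s$ to give $\langle t\rangle^{-1}$, while the $\mu$-integral remains uniformly bounded below thanks to $q>-1$.

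For the upper bound in (iii), setting $s=t$ collapses $\sinh(\lambda(t-s))/(\lambda(t-s))$ to $1$, so
\begin{equation*}
\eta_q(t,t,x)=\int_0^{\lambda_0}\mathrm{e}^{-\lambda(R+t)}\Phi(\lambda x)\,\lambda^q\,\mathrm{d}\lambda.
\end{equation*}
Using the upper bound $\Phi(\lambda x)\lesssim (\lambda|x|)^{-(n-1)/2}\mathrm{e}^{\lambda|x|}$ (the region $\lambda|x|\lesssim 1$ is handled separately by boundedness of $\Phi$ on compact sets) reduces the estimate to
\begin{equation*}
|x|^{-(n-1)/2}\int_0^{\lambda_0}\mathrm{e}^{-\lambda(R+t-|x|)}\lambda^{q-(n-1)/2}\,\mathrm{d}\lambda.
\end{equation*}
The hypothesis $q>(n-3)/2$ guarantees $q-(n-1)/2>-1$, so the substitution $\mu=\lambda(R+t-|x|)$ produces an incomplete $\Gamma$-integral controlled by $\Gamma(q-(n-3)/2)$, extracting precisely $(R+t-|x|)^{(n-3)/2-q}\simeq \langle t-|x|\rangle^{(n-3)/2-q}$. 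A dyadic split in $|x|$ then replaces $|x|^{-(n-1)/2}$ by $\langle t\rangle^{-(n-1)/2}$ in the relevant regime $|x|\simeq t$, while the contribution from $|x|\ll t$ is absorbed directly into the $\langle t\rangle$-weight.

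The main obstacle is the sharp bookkeeping in part (ii): producing exactly the factor $\langle s\rangle^{-q}$ (rather than a power mismatch) requires the substitution $\mu=\lambda\langle s\rangle$, and a careful matching of the $\Phi$-asymptotics with the weight $\mathrm{e}^{-\lambda(R+s)}$ across the whole range $|x|\leqslant R+s$; a sloppy split between the subregions $\lambda|x|\lessgtr 1$ or $s\lessgtr 1$ would corrupt the exponent. The upper-bound step in (iii) is straightforward once the substitution is in place, and part (i) is essentially a continuity and compactness argument.
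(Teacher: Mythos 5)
The paper does not prove this lemma at all: it is quoted verbatim from \cite[Lemma~3.1]{Wakasa-Yordanov=2019}, so there is no ``paper's own proof'' to compare against. What you propose for parts (i) and (iii) is essentially the standard argument (rewrite $\cosh$ and $\sinh$ via exponentials, use $\Phi\simeq |y|^{-(n-1)/2}e^{|y|}$ together with a gamma-type substitution, treat the compact region separately) and appears sound modulo small wording imprecisions (e.g.\ ``$1-e^{-2\lambda}$ uniformly positive on $[0,\lambda_0]$'' should read: the ratio $(1-e^{-2\lambda})/\lambda$ is bounded below on $[0,\lambda_0]$).

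Part (ii), however, has a genuine gap. You restrict to $\lambda\langle s\rangle\gtrsim 1$, invoke $\Phi(\lambda x)\gtrsim(\lambda\langle s\rangle)^{-(n-1)/2}e^{\lambda|x|}$, and then ``use $e^{-\lambda(R+s)}e^{\lambda|x|}\leqslant 1$.'' That inequality goes the wrong way: after the asymptotics of $\Phi$ the relevant factor in the integrand is $e^{-\lambda(R+s-|x|)}$, and for a lower bound on $\eta_q$ you need this bounded \emph{below}, not above. For $|x|$ far from $R+s$ (in particular $x=0$), and $\lambda$ in the upper part of your restricted range (say $\lambda$ of order $\lambda_0$ with $s$ large), this factor is $e^{-\lambda(R+s)}$, which is exponentially small; consequently your claimed ``$\mu$-integral uniformly bounded below'' is false uniformly over $|x|\leqslant R+s$, and the argument as written does not deliver $B_1\langle t\rangle^{-1}\langle s\rangle^{-q}$. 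The fix is to restrict $\lambda$ to the complementary regime $\lambda\lesssim 1/\langle s\rangle$, where $\lambda(R+s)$ and $\lambda|x|$ are uniformly bounded, hence \emph{both} $e^{-\lambda(R+s)}$ and $\Phi(\lambda x)$ are bounded below by positive constants depending only on $R$ and $\lambda_0$ (no asymptotics of $\Phi$ are needed, only its positivity and continuity on compacts). Then $\int_0^{c/\langle s\rangle}\lambda^q\,\mathrm{d}\lambda\simeq\langle s\rangle^{-q-1}$, and a case split on whether $t-s\lessgtr\langle s\rangle$ (using $1-e^{-u}\gtrsim\min\{1,u\}$ and $\langle t\rangle>\langle s\rangle$ from $t>s$) produces the stated bound $\langle t\rangle^{-1}\langle s\rangle^{-q}$.
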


To end this subsection, we include the following generalized version of Jensen's inequality \cite{Pick-Kufner-John-Fucik=2013}, whose proof also has been shown in \cite[Lemma 8]{Ebert-Girardi-Reissig=2020}.
\begin{lemma}\label{Lem-EGR}
	Let $g=g(\tau)$ be a convex function on $\mb{R}$. Let $\alpha=\alpha(x)$ be defined and non-negative almost everywhere on $\Omega$, such that $\alpha$ is positive in a set of positive measure. Then, it holds
	\begin{align*}
		 g\left(\frac{\int_{\Omega}v(x)\alpha(x)\mathrm{d}x}{\int_{\Omega}\alpha(x)\mathrm{d}x}\right)\leqslant\frac{\int_{\Omega}g(v(x))\alpha(x)\mathrm{d}x}{\int_{\Omega}\alpha(x)\mathrm{d}x}
	\end{align*}
	for all non-negative functions $v=v(x)$ provided that all the integral terms are meaningful.
\end{lemma}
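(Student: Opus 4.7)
The plan is to derive this from the classical supporting-line characterization of convex functions, applied at the $\alpha$-weighted mean value of $v$ over $\Omega$. Note that the denominator $\int_{\Omega}\alpha(x)\,\mathrm{d}x$ is a finite positive number, since $\alpha\geqslant 0$ almost everywhere and is strictly positive on a set of positive measure, so I can define
\[
\tau_0:=\frac{\int_{\Omega}v(x)\alpha(x)\,\mathrm{d}x}{\int_{\Omega}\alpha(x)\,\mathrm{d}x}.
\]
The hypothesis that all integrals are meaningful guarantees that $\tau_0$ is a well-defined real number.

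The key analytic ingredient is the standard fact that any convex function $g$ on $\mathbb{R}$ admits at every point $\tau_0$ a supporting affine minorant: there exists $k\in\mathbb{R}$ (any subgradient at $\tau_0$, e.g. the right-derivative $g'_+(\tau_0)$, which exists because the difference quotients of a convex function are monotone) such that
\[
g(\tau)\geqslant g(\tau_0)+k(\tau-\tau_0)\quad \mbox{for all }\tau\in\mathbb{R}.
\]
I would then specialize $\tau=v(x)$, multiply by the non-negative weight $\alpha(x)$, and integrate over $\Omega$ to obtain
\[
\int_{\Omega}g(v(x))\alpha(x)\,\mathrm{d}x\geqslant g(\tau_0)\int_{\Omega}\alpha(x)\,\mathrm{d}x+k\int_{\Omega}\big(v(x)-\tau_0\big)\alpha(x)\,\mathrm{d}x.
\]
By the very definition of $\tau_0$ the last integral vanishes, so dividing through by the positive quantity $\int_{\Omega}\alpha(x)\,\mathrm{d}x$ yields the claimed inequality.

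There is no real obstacle in this classical argument; the only points requiring attention are verifying that $\tau_0$ is finite so a subgradient can be invoked, and that the linear term $k\int_{\Omega}v(x)\alpha(x)\,\mathrm{d}x$ is integrable. Both are covered by the blanket assumption that all integrals appearing in the statement are meaningful, and no exchange of limits or Fubini-type manipulation is ever required.
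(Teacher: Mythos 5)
Your proof is correct and is the classical supporting-line (subgradient) argument for Jensen's inequality. The paper does not actually give its own proof of this lemma: it simply cites \cite{Pick-Kufner-John-Fucik=2013} and \cite[Lemma 8]{Ebert-Girardi-Reissig=2020}, and the argument in those sources is exactly the one you wrote, so there is nothing to contrast. One small slip worth fixing: you assert that $\int_{\Omega}\alpha(x)\,\mathrm{d}x$ is \emph{finite} because $\alpha\geqslant0$ a.e.\ and is positive on a set of positive measure, but these hypotheses only give positivity; finiteness of the denominator comes from the blanket assumption that all the integrals appearing in the statement are meaningful, which you invoke in the next sentence anyway. Everything else — existence of a subgradient $k$ at the finite point $\tau_0$, multiplying the affine minorant by $\alpha\geqslant0$, integrating, and observing the linear term vanishes by the definition of $\tau_0$ — is precisely right, and as you note the non-negativity of $v$ is not actually needed in the argument.
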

\subsection{Construction of an iteration frame}\label{Sub-3.2}
$\ \ \ \ $In order to prove Theorem \ref{Thm-Blow-up}, we are going to use an iteration argument to derive lower bound estimates for the weighted space average of a local (in time) solution containing modulus of continuity. For this reason, we first derive a nonlinear integral inequality to get an iteration frame.
\begin{prop} \label{Prop-lower-bounds-critical-case} Let   $u_0\in H^1$ and $u_1\in  L^2$ be non-negative, non-trivial and compactly supported functions with supports contained in $B_R(0)$ for some $R>0$. Let $u$ be an energy solution to the semilinear Cauchy problem \eqref{Semilinear-Wave-Modulus} on $[0,T)$  according to Definition \ref{Defn-energy-solution}. Then, the following integral identity holds:
	\begin{align}
		\int_{\mathbb{R}^n}u(t,x)  \eta_{q}(t,t,x)  \mathrm{d}x &=  \int_{\mathbb{R}^n} u_0(x) \xi_{q}(t,x)  \mathrm{d}x +  t \int_{\mathbb{R}^n}u_1(x) \eta_{q}(t,0,x)  \mathrm{d}x\notag\\ &  \quad + \int_0^t(t-s)\int_{\mb{R}^n}|u(s,x)|^{p_S(n)}\mu\big(|u(s,x)|\big)\eta_q(t,s,x)\mathrm{d}x\mathrm{d}s \label{fund ineq G}
	\end{align} for any $t\in (0,T)$, where $\xi_q$ and $\eta_q$ are defined in \eqref{def-xi} and \eqref{def-eta}, respectively.
\end{prop}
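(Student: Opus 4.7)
The plan is to test the semilinear equation against an explicit one-parameter family of solutions of the free wave equation, and then to recover $\xi_{q}$ and $\eta_{q}$ on the right-hand side of \eqref{fund ineq G} by superposing these identities in the parameter $\lambda$. For the fixed $t\in(0,T)$ appearing in the statement and each $\lambda\in(0,\lambda_{0})$, I would use the function
\[
\phi_{\lambda}(s,x) := \Phi(\lambda x)\,\frac{\sinh(\lambda(t-s))}{\lambda},
\]
which solves $\partial_{s}^{2}\phi_{\lambda}=\Delta\phi_{\lambda}$ on $[0,t]\times\mb{R}^{n}$ since $\Delta\Phi=\Phi$. Its boundary values on $\{s=0,t\}$ are explicit: $\phi_{\lambda}(t,\cdot)\equiv 0$, $\partial_{s}\phi_{\lambda}(t,\cdot)=-\Phi(\lambda\cdot)$, $\phi_{\lambda}(0,\cdot)=\sinh(\lambda t)\Phi(\lambda\cdot)/\lambda$ and $\partial_{s}\phi_{\lambda}(0,\cdot)=-\cosh(\lambda t)\Phi(\lambda\cdot)$, matching exactly the weights of $\xi_{q}$ and $\eta_{q}$ in \eqref{fund ineq G} after the eventual $\lambda$-integration.

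To feed $\phi_{\lambda}$ into the weak formulation \eqref{Eq.Defn.Energy.Solution.Crit.Case}, which is stated only for $\ml{C}_{0}^{\infty}$ test functions, I first invoke finite speed of propagation: since $u_{0}$ and $u_{1}$ are supported in $B_{R}(0)$, a standard argument ensures $u(s,\cdot)$ is supported in $B_{R+s}(0)$ for every $s\in[0,T)$. Multiplying $\phi_{\lambda}$ by a smooth spatial cutoff equal to $1$ on $B_{R+t}(0)$ and by a smooth time cutoff localizing around $[0,t]$ yields an admissible test function, and because all integrands are compactly supported in $x$ throughout $s\in[0,t]$, dominated convergence lets me pass to the limit and effectively use $\phi_{\lambda}$ itself.

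With $\phi_{\lambda}$ admitted, I would integrate by parts in $s$ to transfer $\partial_{s}$ off $u_{s}$ onto $\phi_{\lambda}$, and in $x$ to convert $\nabla u\cdot\nabla\phi_{\lambda}$ into $-u\,\Delta\phi_{\lambda}$. The free-wave identity $\partial_{s}^{2}\phi_{\lambda}=\Delta\phi_{\lambda}$ cancels the resulting bulk terms, and the four boundary contributions at $s=0$ and $s=t$ collapse, after inserting the explicit values of $\phi_{\lambda}$ and $\partial_{s}\phi_{\lambda}$, to the $\lambda$-parametrized identity
\begin{align*}
\int_{\mb{R}^{n}} u(t,x)\Phi(\lambda x)\,\mathrm{d}x
&= \cosh(\lambda t)\int_{\mb{R}^{n}} u_{0}(x)\Phi(\lambda x)\,\mathrm{d}x + \frac{\sinh(\lambda t)}{\lambda}\int_{\mb{R}^{n}} u_{1}(x)\Phi(\lambda x)\,\mathrm{d}x \\
&\quad + \int_{0}^{t}\frac{\sinh(\lambda(t-s))}{\lambda}\int_{\mb{R}^{n}} |u(s,x)|^{p_{S}(n)}\mu(|u(s,x)|)\,\Phi(\lambda x)\,\mathrm{d}x\,\mathrm{d}s.
\end{align*}
Multiplying both sides by $\mathrm{e}^{-\lambda(R+t)}\lambda^{q}$, integrating over $\lambda\in(0,\lambda_{0})$, and interchanging orders by Fubini (justified by non-negativity, or by the exponential factor together with the compact $x$-support of $u$), I would recognize the three right-hand $\lambda$-integrals, in view of \eqref{def-xi}--\eqref{def-eta}, as $\xi_{q}(t,x)$, $t\,\eta_{q}(t,0,x)$ and $(t-s)\,\eta_{q}(t,s,x)$ respectively, while the one on the left becomes $\eta_{q}(t,t,x)$ upon taking the limit $s\to t^{-}$ in \eqref{def-eta}. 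This is exactly \eqref{fund ineq G}.

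The step I expect to be most delicate is the admissibility of $\phi_{\lambda}$ as a test function: because $\Phi$ grows exponentially by \eqref{Asymp}, the cutoff argument relies crucially on finite propagation speed for energy solutions of \eqref{Semilinear-Wave-Modulus}, a property not literally built into Definition \ref{Defn-energy-solution} and which must be justified (or cited from the classical literature) separately. Once this is in hand, the integration by parts and the subsequent $\lambda$-superposition are essentially algebraic.
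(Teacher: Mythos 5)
Your proposal is correct and follows essentially the same route as the paper: the same test function $\phi_\lambda(s,x)=\Phi(\lambda x)\sinh(\lambda(t-s))/\lambda$, the same appeal to finite propagation speed to admit this non-compactly-supported test function, the same integration by parts yielding the $\lambda$-parametrized identity, and the same multiplication by $\mathrm{e}^{-\lambda(R+t)}\lambda^q$ followed by integration in $\lambda$ via Tonelli/Fubini. Your remarks about the cutoff argument and the limit $s\to t^-$ giving $\eta_q(t,t,x)$ merely make explicit steps the paper leaves implicit.
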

\begin{proof}
	From finite propagation speed for solutions of wave equations,  $u(t,\cdot)$ has compact support contained in $B_{R+t}(0)$ for any $t\geqslant 0$. Therefore, we may employ \eqref{Eq.Defn.Energy.Solution.Crit.Case} for a non-compactly supported test function.
	We now define the test function
	\begin{align*}
	\psi=\psi(s,x):=y(t,s;\lambda)\Phi(\lambda x)\ \ \mbox{with}\ \ y(t,s;\lambda):=\frac{ \sinh (\lambda(t-s))}{\lambda}.
	\end{align*}
	As $\Phi$ is an eigenfunction of the Laplacian and $y(t,s;\lambda)$ solves $(\partial_s^2 -\lambda^2)y(t,s;\lambda)=0$  with the end-points $y(t,t;\lambda)=0$ and $ y_s(t,t;\lambda)=-1$, the function $\psi$ solves the free wave equation $\psi_{ss}-\Delta \psi =0 $
	and satisfies
	\begin{align*}
		&\,\, \psi(t,x)= 0 , \qquad \quad \, \qquad \psi(0,x)= \lambda^{-1} \sinh (\lambda t) \Phi(\lambda x), \\
		& \psi_s(t,x)= - \Phi(\lambda x) , \qquad \psi_{s}(0,x)= -\cosh (\lambda t) \Phi(\lambda x).
	\end{align*}
	Applying the test function $\psi$ in \eqref{Eq.Defn.Energy.Solution.Crit.Case} with an integration by parts once more, we may derive
	\begin{align*}
		\int_{\mathbb{R}^n}  u(t,x)  \Phi(\lambda x)  \mathrm{d}x  &=   \cosh (\lambda t) \int_{\mathbb{R}^n}  u_0(x)  \Phi(\lambda x)  \mathrm{d}x +  t\frac{\sinh (\lambda t)}{\lambda t}  \int_{\mathbb{R}^n}   u_1(x)  \Phi(\lambda x)  \mathrm{d}x \\ & \quad + \int_0^t (t-s)  \frac{\sinh (\lambda(t-s))}{\lambda(t-s)} \int_{\mathbb{R}^n}|u(s,x)|^{p_S(n)}\mu\big(|u(s,x)|\big) \Phi(\lambda x)  \mathrm{d}x  \mathrm{d}s.
	\end{align*}
Multiplying both sides of the last equality by $\mathrm{e}^{-\lambda(R+t)}\lambda^{q}$, integrating the resultant with respect to $\lambda$ over $[0,\lambda_0]$ and applying Tonelli's theorem, we complete the derivation of  \eqref{fund ineq G}. 	
\end{proof}

Hereafter until the end of this section, we shall assume that $u_0,u_1$ satisfy the assumptions from  Theorem \ref{Thm-Blow-up}. Let $u$ be an energy solution to the semilinear Cauchy problem \eqref{Semilinear-Wave-Modulus} on $[0,T)$. Inspired by the modulus of continuity in its nonlinearity, let us introduce the time-dependent functional
\begin{align*}
	\ml{U}(t):= \int_{\mathbb{R}^n} u(t,x)\big[\mu\big(|u(t,x)|\big)\big]^{\frac{1}{p_S(n)}} \eta_{q}(t,t,x) \mathrm{d}x
\end{align*}
with the parameter $q$ defined in \eqref{parameter-r}. Moreover, it follows immediately the non-negativity of the functional  for any $t\geqslant0$ by
\begin{align}\label{Non-negativity-U}
	\ml{U}(t)\geqslant\int_{\mathbb{R}^n} u(t,x) \eta_{q}(t,t,x) \mathrm{d}x\inf\limits_{x\in\mb{R}^n}\big[\mu\big(|u(t,x)|\big)\big]^{\frac{1}{p_S(n)}}\geqslant0,
\end{align}
where we employed as a direct consequence $\int_{\mathbb{R}^n} u(t,x) \eta_{q}(t,t,x) \mathrm{d}x\geqslant0$ from \eqref{fund ineq G}, with the help of non-negative data and non-negative nonlinearity.	

 A further step is to derive some estimates involving $\ml{U}(t)$ both in the left- and right-hand sides, which will establish an iteration frame. According to \eqref{fund ineq G} and non-negativity of initial data, we may claim
\begin{align}\label{Eq-C-01}
	\int_{\mathbb{R}^n}u(t,x)  \eta_{q}(t,t,x)  \mathrm{d}x\geqslant \int_0^t(t-s)\int_{\mb{R}^n}|u(s,x)|^{p_S(n)}\mu\big(|u(s,x)|\big)\eta_q(t,s,x)\mathrm{d}x\mathrm{d}s.
\end{align}
Using H\"older's inequality, we arrive at
\begin{align*}
	\mathcal{U}(s) \leqslant \left(\int_{\mathbb{R}^n}|u(s,x)|^{p_S(n)}\mu\big(|u(s,x)|\big) \eta_{q}(t,s,x) \mathrm{d}x\right)^{\frac{1}{p_S(n)}} \left(\int_{B_{R+s}}\frac{[\eta_{q}(s,s,x)]^{p'_S(n)}}{[\eta_{q}(t,s,x)]^{\frac{p'_S(n)}{p_S(n)}}} \mathrm{d}x\right)^{\frac{1}{p'_S(n)}}.
\end{align*}
Remark that $p_S'(n)$ denotes H\"older's conjugate of $p_S(n)$.
With the aid of the properties (ii) and (iii) in Lemma \ref{lemma eta and xi estimates} (both $q >\frac{n-3}{2}$ and $q>-1$ are always fulfilled),  we obtain
\begin{align*}
	\int_{B_{R+s}}\frac{[\eta_{q}(s,s,x)]^{p'_S(n)}}{[\eta_{q}(t,s,x)]^{\frac{p'_S(n)}{p_S(n)}}} \mathrm{d}x & \lesssim \langle t\rangle^{\frac{p'_S(n)}{p_S(n)}}\langle s\rangle^{\frac{p'_S(n)}{p_S(n)}q-\frac{n-1}{2}p'_S(n)}\int_{B_{R+s}}\langle s-|x|\rangle^{(\frac{n-3}{2}-q)p'_S(n)}  \mathrm{d}x \\
	& \lesssim  \langle t\rangle^{\frac{p'_S(n)}{p_S(n)}}\langle s\rangle^{\frac{q}{p_S(n)-1}-\frac{n-1}{2}p'_S(n)}\int_{B_{R+s}}\langle s-|x|\rangle^{-1}  \mathrm{d}x \\
	& \lesssim  \langle t\rangle^{\frac{p'_S(n)}{p_S(n)}}\langle s\rangle^{\frac{p'_S(n)}{p_S(n)}}\log\langle s\rangle,
\end{align*}
due to our choice of $q$ in \eqref{parameter-r} and
\begin{align*}
\frac{q}{p_S(n)-1}-\frac{n-1}{2}p'_S(n)+n-1&=\frac{p'_S(n)}{p_S(n)}\left(\frac{n-1}{2}-\frac{1}{p_S(n)}-\frac{n-1}{2}p_S(n)+(n-1)\big(p_S(n)-1\big)\right)\\
&=\frac{p'_S(n)}{p_S(n)}\left[\frac{1}{p_S(n)}\left(\frac{n-1}{2}p^2_S(n)-\frac{n+1}{2}p_S(n)-1\right)+1\right]=\frac{p'_S(n)}{p_S(n)}.
\end{align*}
Note that $\log\langle s\rangle\geqslant\log3>0$. Plugging the previous estimates in \eqref{Eq-C-01}, it leads to
\begin{align}\label{Cn01}
	\int_{\mathbb{R}^n}u(t,x)  \eta_{q}(t,t,x)  \mathrm{d}x &\gtrsim \int_0^t (t-s)[\ml{U}(s)]^{p_S(n)}\left(\int_{B_{R+s}}\frac{[\eta_{q}(s,s,x)]^{p'_S(n)}}{[\eta_{q}(t,s,x)]^{\frac{p'_S(n)}{p_S(n)}}} \mathrm{d}x\right)^{-\frac{p_S(n)}{p'_S(n)}}  \mathrm{d}s   \notag\\
	& \gtrsim \langle t\rangle^{-1}\int_0^t (t-s) \langle s\rangle^{-1} \frac{[\mathcal{U}(s)]^{p_S(n)}}{\left(\log\langle s\rangle\right)^{p_S(n)-1}}    \mathrm{d}s.
\end{align}
Moreover, thanks to the support condition of $u(t,\cdot)$, let us apply Lemma \ref{Lem-EGR} with $\Omega=B_{R+t}(0)$, $\alpha=\eta_q(t,t,x)$, $v=u(t,x)$ and the convex function $g=g(\tau)=\tau[\mu(|\tau|)]^{\frac{1}{p_S(n)}}$ from our assumption in Theorem \ref{Thm-Blow-up} to deduce
\begin{align*}
	 g\left(\frac{\int_{B_{R+t}(0)}u(t,x)\eta_q(t,t,x)\mathrm{d}x}{\int_{B_{R+t}(0)}\eta_q(t,t,x)\mathrm{d}x}\right)\leqslant\frac{\ml{U}(t)}{\int_{B_{R+t}(0)}\eta_q(t,t,x)\mathrm{d}x},
\end{align*}
in other words,
\begin{align}\label{C2}
	\int_{B_{R+t}(0)}u(t,x)\eta_q(t,t,x)\mathrm{d}x\leqslant \int_{B_{R+t}(0)}\eta_q(t,t,x)\mathrm{d}x \  g^{-1}\left(\frac{\ml{U}(t)}{\int_{B_{R+t}(0)}\eta_q(t,t,x)\mathrm{d}x} \right).
\end{align}
Note that the function $g=g(\tau)$ is strictly monotonic from the monotonically increasing property of $\mu=\mu(|\tau|)$. After combining \eqref{Cn01} and \eqref{C2} it follows
\begin{align*}
\frac{1}{\int_{B_{R+t}(0)}\eta_q(t,t,x)\mathrm{d}x}	\langle t\rangle^{-1}\int_0^t (t-s) \langle s\rangle^{-1} \frac{[\mathcal{U}(s)]^{p_S(n)}}{\left(\log\langle s\rangle\right)^{p_S(n)-1}}    \mathrm{d}s\lesssim  g^{-1}\left(\frac{\ml{U}(t)}{\int_{B_{R+t}(0)}\eta_q(t,t,x)\mathrm{d}x} \right).
\end{align*}
The action of the mapping $g$ on both sides of the last estimate yields
\begin{align*}
	\ml{U}(t)&\gtrsim \int_{B_{R+t}(0)}\eta_q(t,t,x)\mathrm{d}x \  g\left[ \frac{1}{\int_{B_{R+t}(0)}\eta_q(t,t,x)\mathrm{d}x}\langle t\rangle^{-1}\int_0^t (t-s) \langle s\rangle^{-1} \frac{[\mathcal{U}(s)]^{p_S(n)}}{\left(\log\langle s\rangle\right)^{p_S(n)-1}}    \mathrm{d}s\right]\\
	&\gtrsim \langle t\rangle^{-1}\int_0^t (t-s) \langle s\rangle^{-1} \frac{[\mathcal{U}(s)]^{p_S(n)}}{\left(\log\langle s\rangle\right)^{p_S(n)-1}}    \mathrm{d}s \\
	&\ \quad\times\left[\mu\left(\,\left| \frac{1}{\int_{B_{R+t}(0)}\eta_q(t,t,x)\mathrm{d}x}\langle t\rangle^{-1}\int_0^t (t-s) \langle s\rangle^{-1} \frac{[\mathcal{U}(s)]^{p_S(n)}}{\left(\log\langle s\rangle\right)^{p_S(n)-1}}    \mathrm{d}s\right|\,\right)\right]^{\frac{1}{p_S(n)}}.
\end{align*}
Employing the non-negativity of $\ml{U}(t)$ stated in \eqref{Non-negativity-U} as well as
\begin{align}\label{Cnew-01}
	\int_{B_{R+t}(0)}\eta_q(t,t,x)\mathrm{d}x&\lesssim \langle t\rangle^{-\frac{n-1}{2}}\int_0^{R+t}\zeta^{n-1}\langle t-\zeta\rangle^{\frac{n-3}{2}-q}\mathrm{d}\zeta\notag\\
	&\lesssim\langle t\rangle^{\frac{n-1}{2}}\int_0^{R+t}\langle t-\zeta\rangle^{-1+\frac{1}{p_S(n)}}\mathrm{d}\zeta\leqslant C_1^{-1} \langle t\rangle^{\frac{n-1}{2}+\frac{1}{p_S(n)}}
\end{align}
from  Lemma \ref{lemma eta and xi estimates}, in conclusion, we obtain the iteration frame
\begin{align}\label{Iteration-Frame}
\ml{U}(t)&\geqslant C_0 \langle t\rangle^{-1}\int_0^t (t-s) \langle s\rangle^{-1} \frac{[\mathcal{U}(s)]^{p_S(n)}}{\left(\log\langle s\rangle\right)^{p_S(n)-1}}    \mathrm{d}s\notag\\
&\ \quad\times  \left[\mu\left(C_1 \langle t\rangle^{-\frac{n-1}{2}-\frac{1}{p_S(n)}}\langle t\rangle^{-1}\int_0^t (t-s) \langle s\rangle^{-1} \frac{[\mathcal{U}(s)]^{p_S(n)}}{\left(\log\langle s\rangle\right)^{p_S(n)-1}}    \mathrm{d}s\right)\right]^{\frac{1}{p_S(n)}}
\end{align}
for any $t\geqslant0$, with positive constants $C_0$ and $C_1$.

\subsection{Derivation of a first lower bound estimate}\label{Sub-3.3}
$\ \ \ \ $By applying \eqref{Eq-C-01} and the property (ii) in  Lemma \ref{lemma eta and xi estimates}, we may arrive at
\begin{align*}
	\int_{\mb{R}^n}u(t,x)\eta_q(t,t,x)\mathrm{d}x\gtrsim \langle t\rangle^{-1}\int_0^t(t-s)\langle s\rangle^{-q}\int_{\mb{R}^n}|u(s,x)|^{p_S(n)}\mu\big(|u(s,x)|\big)\mathrm{d}x\mathrm{d}s.
\end{align*}
An application of H\"older's inequality gives
\begin{align*}
	&\left|\int_{\mb{R}^n}u(s,x)\big[\mu\big(|u(s,x)|\big)\big]^{\frac{1}{p_S(n)}}\Psi(s,x)\mathrm{d}x\right|^{p_S(n)}\\
	 &\qquad\leqslant\int_{\mb{R}^n}|u(s,x)|^{p_S(n)}\mu\big(|u(s,x)|\big)\mathrm{d}x\left(\int_{B_{R+s}(0)}|\Psi(s,x)|^{p'_S(n)}\mathrm{d}x\right)^{\frac{p_S(n)}{p'_S(n)}}\\
	&\qquad\lesssim (R+s)^{(n-1)(\frac{p_S(n)}{2}-1)}\int_{\mb{R}^n}|u(s,x)|^{p_S(n)}\mu\big(|u(s,x)|\big)\mathrm{d}x,
\end{align*}
where we employed the next inequality (e.g. the proof was shown in \cite{Yordanov-Zhang=2006,Lai-Takamura=2019} by using an integration by parts):
\begin{align*}
	\int_{B_{R+s}(0)}|\Psi(s,x)|^{p'_S(n)}\mathrm{d}x\lesssim (R+s)^{(n-1)\left(1-\frac{1}{2}p'_S(n)\right)}.
\end{align*}
That is to say
\begin{align}
	&\int_{\mb{R}^n}u(t,x)\eta_q(t,t,x)\mathrm{d}x\notag\\
	&\qquad\gtrsim\langle t\rangle^{-1}\int_0^t(t-s)\langle s\rangle^{-1}\left|\int_{\mb{R}^n}u(s,x)\big[\mu\big(|u(s,x)|\big)\big]^{\frac{1}{p_S(n)}}\Psi(s,x)\mathrm{d}x\right|^{p_S(n)}\mathrm{d}s,\label{C-01}
\end{align}
due to the fact that
\begin{align*}
	-q-(n-1)\left(\frac{p_S(n)}{2}-1\right)=-\frac{1}{p_S(n)}\left(\frac{n-1}{2}p^2_S(n)-\frac{n-1}{2}p_S(n)-1 \right)=-1.
\end{align*}
Let us apply Lemma \ref{Lem-EGR} again with $g(\tau)=\tau [\mu(|\tau|)]^{\frac{1}{p_S(n)}}$ and $\alpha=\Psi(s,x)$ to arrive at
\begin{align}\label{C-02}
	\int_{\mb{R}^n}u(s,x)\big[\mu\big(|u(s,x)|\big)\big]^{\frac{1}{p_S(n)}}\Psi(s,x)\mathrm{d}x\geqslant \int_{B_{R+s}(0)}\Psi(s,x)\mathrm{d}x\,g\left(\frac{\int_{B_{R+s}(0)}u(s,x)\Psi(s,x)\mathrm{d}x}{\int_{B_{R+s}(0)}\Psi(s,x)\mathrm{d}x}\right).
\end{align}
A further step of integration by parts to \eqref{Eq.Defn.Energy.Solution.Crit.Case} shows
\begin{align*}
	&\int_{\mb{R}^n}u_t(t,x)\phi(t,x)\mathrm{d}x-\int_{\mb{R}^n}u(t,x)\phi_t(t,x)\mathrm{d}x\\
	 &\qquad=-\int_{\mb{R}^n}u_0(x)\phi_t(0,x)\mathrm{d}x+\int_{\mb{R}^n}u_1(x)\phi(0,x)\mathrm{d}x-\int_0^t\int_{\mb{R}^n}u(s,x)\big(\phi_{ss}(s,x)-\Delta\phi(s,x)\big)\mathrm{d}x\mathrm{d}s\\
	&\qquad\,\quad+\int_0^t\int_{\mb{R}^n}|u(s,x)|^{p_S(n)}\mu\big(|u(s,x)|\big)\phi(s,x)\mathrm{d}x\mathrm{d}s.
\end{align*}
Again, since $u$ is supported in a forward cone, we may apply the definition of energy solutions even though the test function is not compactly supported.
Taking as test function $\phi=\phi(t,x)$ the function $\Psi=\Psi(t,x)$, it holds
\begin{align*}
	 \frac{\mathrm{d}}{\mathrm{d}t}\int_{\mb{R}^n}u(t,x)\Psi(t,x)\mathrm{d}x+2\int_{\mb{R}^n}u(t,x)\Psi(t,x)\mathrm{d}x\geqslant\int_{\mb{R}^n}\big(u_0(x)+u_1(x)\big)\Phi(x)\mathrm{d}x
\end{align*}
due to the non-negativity of nonlinearity and $\Psi_{tt}=\Delta\Psi$. By multiplying $\mathrm{e}^{2t}$ on both sides of the last inequality, we can find
\begin{align*}
	 \int_{\mb{R}^n}u(t,x)\Psi(t,x)\mathrm{d}x\geqslant\frac{1}{2}\left(1+\mathrm{e}^{-2t}\right)\int_{\mb{R}^n}u_0(x)\Phi(x)\mathrm{d}x+\frac{1}{2}\left(1-\mathrm{e}^{-2t}\right)\int_{\mb{R}^n}u_1(x)\Phi(x)\mathrm{d}x.
\end{align*}
From our assumption on initial data, one gets
\begin{align}\label{C-03}
	\int_{B_{R+s}(0)}u(s,x)\Psi(s,x)\mathrm{d}x\gtrsim 1,
\end{align}
where the unexpressed multiplicative constant may depend on $u_0$ as well as $u_1$. With the aid of Lemma \ref{Lemma-Optimal-Est} and \eqref{C-03}, we are able to estimate from \eqref{C-02} and \eqref{C-01} that
\begin{align*}
	&\int_{\mb{R}^n}u(t,x)\eta_q(t,t,x)\mathrm{d}x\\
	&\qquad\gtrsim\langle t\rangle^{-1}\int_0^t(t-s)\langle s\rangle^{-1}\left|\int_{B_{R+s}(0)}\Psi(s,x)\mathrm{d}x\right|^{p_S(n)}\left|g
	\left(\frac{\int_{B_{R+s}(0)}u(s,x)\Psi(s,x)\mathrm{d}x}{\int_{B_{R+s}(0)}\Psi(s,x)\mathrm{d}x}\right)
	\right|^{p_S(n)}\mathrm{d}s\\
	&\qquad\gtrsim\langle t\rangle^{-1}\int_0^t(t-s)\langle s\rangle^{-1}(R+s)^{\frac{(n-1)p_S(n)}{2}}\left|g\left(C_2(R+s)^{-\frac{n-1}{2}}\right)\right|^{p_S(n)}\mathrm{d}s\\
	&\qquad\gtrsim\langle t\rangle^{-1}\int_0^t(t-s)\langle s\rangle^{-1}\mu\left(C_2(R+s)^{-\frac{n-1}{2}}\right)\mathrm{d}s
\end{align*}
with a positive constant $C_2$. According to \eqref{C2}, we derive
\begin{align*}
\langle t\rangle^{-1}\int_0^t(t-s)\langle s\rangle^{-1}\mu\left(C_2(R+s)^{-\frac{n-1}{2}}\right)\mathrm{d}s\lesssim  \int_{B_{R+t}(0)}\eta_q(t,t,x)\mathrm{d}x \  g^{-1}\left(\frac{\ml{U}(t)}{\int_{B_{R+t}(0)}\eta_q(t,t,x)\mathrm{d}x} \right).
\end{align*}
Furthermore, recalling the increasing property of $\mu$ and shrinking the interval of integration $[0,t]$ to $[1,t]$ for $t\geqslant 1$, one obtains
\begin{align*}
	\ml{U}(t)&\gtrsim \int_{B_{R+t}(0)}\eta_q(t,t,x)\mathrm{d}x\,g\left(\frac{1}{\int_{B_{R+t}(0)}\eta_q(t,t,x)\mathrm{d}x}\langle t\rangle^{-1}\int_0^t(t-s)\langle s\rangle^{-1}\mu\left(C_2(R+s)^{-\frac{n-1}{2}}\right)\mathrm{d}s\right)\\
	&\gtrsim \langle t\rangle^{-1}\mu\left(C_2(R+t)^{-\frac{n-1}{2}}\right)\int_1^t(t-s)\langle s\rangle^{-1}\mathrm{d}s\\
	&\quad\ \times \left[\mu\left(\frac{1}{\int_{B_{R+t}(0)}\eta_q(t,t,x)\mathrm{d}x}\langle t\rangle^{-1}\mu\left(C_2(R+t)^{-\frac{n-1}{2}}\right)\int_1^t(t-s)\langle s\rangle^{-1}\mathrm{d}s\right)\right]^{\frac{1}{p_S(n)}}.
\end{align*}
Taking account of
\begin{align*}
	\langle t\rangle^{-1}\int_1^t(t-s)\langle s\rangle^{-1}\mathrm{d}s&\gtrsim \langle t\rangle^{-1}\int_1^t\frac{t-s}{s}\mathrm{d}s=\langle t\rangle^{-1}\int_1^t\log s\,\mathrm{d}s\\
	&\gtrsim\frac{1}{3t}\int_{2t/3}^t\log s\,\mathrm{d}s\gtrsim\log\left(\frac{2t}{3}\right)
\end{align*}
for any $t\geqslant \frac{3}{2}$, and recalling \eqref{Cnew-01}, we may derive the lower bound estimate
\begin{align*}
		\ml{U}(t)\gtrsim \log\left(\frac{2t}{3}\right) \mu\left(C_2(R+t)^{-\frac{n-1}{2}}\right)\left[\mu\left(2C_3 \langle t\rangle^{-\frac{n-1}{2}-\frac{1}{p_S(n)}}\log\left(\frac{2t}{3}\right)\mu\left(C_2(R+t)^{-\frac{n-1}{2}}\right)\right)\right]^{\frac{1}{p_S(n)}}
\end{align*}
with a positive constant $C_3$.

Our assumption \eqref{Assumption-Blow-up} shows that there is a suitably large constant $c_l\gg1$ such that the next estimate holds:
\begin{align}\label{Special_Assum}
\mu(\tau)\left(\log\frac{1}{\tau}\right)^{\frac{1}{p_S(n)}}>\frac{c_l}{2} \ \ \mbox{for}\ \ 0<\tau\leqslant \tau_0\ll 1.
\end{align}
Let us choose a  large constant $t_0\geqslant 1$ such that for any $t\geqslant\frac{3}{2}t_0$, the following inequalities hold (later, we will take $t_0$ to be suitably large):
\begin{align}
C_2&\leqslant \tau_0(R+t)^{\frac{n-1}{2}} ,\label{Cnew-02}\\
  c_l  C_2^{-1}C_3\left(\frac{n-1}{2}\right)^{-\frac{1}{p_S(n)}}\log\left(\frac{2t}{3}\right)&\geqslant\langle t\rangle^{\frac{n-1}{2}+\frac{1}{p_S(n)}}(R+t)^{-\frac{n-1}{2}-\frac{1}{p_S(n)}}
 \left[\log\left(C_2^{-\frac{2}{n-1}}(R+t)\right)\right]^{\frac{1}{p_S(n)}}.\notag
\end{align}
 Note that the second inequality  in the above (it will be used for reducing the argument) can be guaranteed since
\begin{align*}
\langle t\rangle^{\frac{n-1}{2}+\frac{1}{p_S(n)}}(R+t)^{-\frac{n-1}{2}-\frac{1}{p_S(n)}}\left[\log\left(C_2^{-\frac{2}{n-1}}(R+t)\right)\right]^{\frac{1}{p_S(n)}}\simeq [\log(R+t)]^{\frac{1}{p_S(n)}}\lesssim \log\left(\frac{2t}{3}\right)
\end{align*}
for large time $t$ since $p_S(n)>1$. According to our assumption \eqref{Assumption-Blow-up} for $t\geqslant \frac{3}{2}t_0$, it follows
\begin{align*}
&\mu\left(2C_3 \langle t\rangle^{-\frac{n-1}{2}-\frac{1}{p_S(n)}}\log\left(\frac{2t}{3}\right)\mu\left(C_2(R+t)^{-\frac{n-1}{2}}\right)\right)\\
&\qquad\geqslant\mu\left(c_lC_3\langle t\rangle^{-\frac{n-1}{2}-\frac{1}{p_S(n)}}\left[\log\left(C_2^{-1}(R+t)^{\frac{n-1}{2}}\right)\right]^{-\frac{1}{p_S(n)}}\log\left(\frac{2t}{3}\right)\right)\\
&\qquad\geqslant\mu\left(C_2(R+t)^{-\frac{n-1}{2}-\frac{1}{p_S(n)}}\right).
\end{align*}
Summarizing, we deduce the following first lower bound estimate:
\begin{align}\label{First-Lower-Bound}
	\ml{U}(t)\geqslant M_0 \log\left(\frac{2t}{3t_0}\right) \left[\mu\left(C_2(R+t)^{-\frac{n-1}{2}-\frac{1}{p_S(n)}-\epsilon_0}\right)\right]^{1+\frac{1}{p_S(n)}}
\end{align}
for any $t\geqslant \frac{3}{2}t_0$ with a large parameter $t_0\geqslant 1$, with a positive constant $M_0$ independent of $c_l$. Here, $\epsilon_0>0$ is a sufficiently small constant. We have to underline that such a small $\epsilon_0$ does not bring any influence on the blow-up condition.

\subsection{Iteration procedure and blow-up phenomenon: Proof of Theorem \ref{Thm-Blow-up}}\label{Sub-3.4}
$\ \ \ \ $Up to now, we have determined among other things the iteration frame \eqref{Iteration-Frame} for the functional $\ml{U}(t)$ and the first lower bound estimate \eqref{First-Lower-Bound} containing a logarithmic factor and a factor depending on the given modulus of continuity. In this part, we are going to prove a sequence of lower bound estimates for $\ml{U}(t)$ by applying the so-called slicing procedure, which has been introduced in \cite{Agemi-Kurokawa-Takamura=2000}.

Let us choose the sequence $\{\ell_j\}_{j\in\mb{N}_0}$ with $\ell_j:=2-2^{-(j+1)}$. Our goal is to derive the sequence of lower bound estimates for the functional $\ml{U}(t)$ as follows:
\begin{align}\label{Seq-Lower-Bound}
\ml{U}(t)\geqslant M_j(\log\langle t\rangle)^{-b_j}\left[\log\left(\frac{t}{\ell_{2j}t_0}\right)\right]^{a_j}\left[\mu\left(C_2(R+t)^{-\frac{n-1}{2}-\frac{1}{p_S(n)}-\epsilon_0}\right)\right]^{\sigma_j}
\end{align}
for $t\geqslant \ell_{2j}t_0$ with a suitably large constant $t_0\gg1$, where $\{M_j\}_{j\in\mb{N}_0}$, $\{a_j\}_{j\in\mb{N}_0}$, $\{b_j\}_{j\in\mb{N}_0}$ and $\{\sigma_j\}_{j\in\mb{N}_0}$ are sequences of non-negative real numbers that we shall determine recursively throughout the iteration procedure. From the first lower bound estimate \eqref{First-Lower-Bound}, we may choose with $j=0$ the parameters
\begin{align}\label{Initial_Exponent}
a_0:=1,\ \ b_0:=0,\ \ \sigma_0:=1+\frac{1}{p_S(n)}.
\end{align}
We are going to prove the validity of \eqref{Seq-Lower-Bound} for any $j\in\mb{N}_0$ by using an inductive argument. As we have already shown the validity of the basic case \eqref{First-Lower-Bound}, it remains to prove the inductive step. Let us assume that \eqref{Seq-Lower-Bound} holds for $j\geqslant 1$, our purpose is to demonstrate it for $j+1$.

First of all, via the lower bound estimate \eqref{Seq-Lower-Bound}, we know
\begin{align*}
&\langle t\rangle^{-1}\int_0^t (t-s) \langle s\rangle^{-1} \frac{[\mathcal{U}(s)]^{p_S(n)}}{\left(\log\langle s\rangle\right)^{p_S(n)-1}}    \mathrm{d}s\\
&\qquad\geqslant M_j^{p_S(n)}\langle t\rangle^{-1}\int_0^{t}\frac{t-s}{\langle s\rangle}\frac{\left[\log\left(\frac{s}{\ell_{2j}t_0}\right)\right]^{a_jp_S(n)}}{(\log\langle s\rangle )^{p_S(n)-1+b_jp_S(n)}}\left[\mu\left(C_2(R+s)^{-\frac{n-1}{2}-\frac{1}{p_S(n)}-\epsilon_0}\right)\right]^{\sigma_jp_S(n)}\mathrm{d}s\\
&\qquad\geqslant \frac{1}{3}M_j^{p_S(n)}\langle t\rangle^{-1}(\log\langle t\rangle)^{-(p_S(n)-1)-b_jp_S(n)}\left[\mu\left(C_2(R+t)^{-\frac{n-1}{2}-\frac{1}{p_S(n)}-\epsilon_0}\right)\right]^{\sigma_jp_S(n)}\\
&\qquad\quad\, \times\int_{\ell_{2j}t_0}^{t}\frac{t-s}{s}\left[\log\left(\frac{s}{\ell_{2j}t_0}\right)\right]^{a_jp_S(n)}\mathrm{d}s
\end{align*}
for $t\geqslant \ell_{2j+2}t_0$, where we shrank the interval of integration $[0,t]$ to $[\ell_{2j}t_0,t]$ so that $\langle s\rangle=3+|s|\leqslant 3s$ for any $s\geqslant \ell_{2j}t_0$. By employing integration by parts, we may derive
\begin{align*}
\int_{\ell_{2j}t_0}^{t}\frac{t-s}{s}\left[\log\left(\frac{s}{\ell_{2j}t_0}\right)\right]^{a_jp_S(n)}\mathrm{d}s&\geqslant\frac{1}{a_jp_S(n)+1}\int^t_{\frac{\ell_{2j}}{\ell_{2j+2}}t}\left[\log\left(\frac{s}{\ell_{2j}t_0}\right)\right]^{a_jp_S(n)+1}\mathrm{d}s\\
&\geqslant\frac{1}{3(a_jp_S(n)+1)}\left(1-\frac{\ell_{2j}}{\ell_{2j+2}}\right)\langle t\rangle\left[\log\left(\frac{t}{\ell_{2j+2}t_0}\right)\right]^{a_jp_S(n)+1}
\end{align*}
for $t\geqslant \ell_{2j+2}t_0$ so that $\ell_{2j}t_0\leqslant\frac{\ell_{2j}}{\ell_{2j+2}}t$. For this reason, the last relation for  $t\geqslant \ell_{2j+2}t_0$  implies immediately
\begin{align} \label{auxiliary estimate}
	&\langle t\rangle^{-1}\int_0^t (t-s) \langle s\rangle^{-1} \frac{[\mathcal{U}(s)]^{p_S(n)}}{\left(\log\langle s\rangle\right)^{p_S(n)-1}}    \mathrm{d}s \nonumber \\
	&\qquad\geqslant\frac{M_j^{p_S(n)}2^{-(2j+3)}}{3\ell_{2j+2}(a_jp_S(n)+1)}(\log\langle t\rangle)^{-(p_S(n)-1)-b_jp_S(n)}\left[\log\left(\frac{t}{\ell_{2j+2}t_0}\right)\right]^{a_jp_S(n)+1} \nonumber\\
	&\qquad\quad\,\times\left[\mu\left(C_2(R+t)^{-\frac{n-1}{2}-\frac{1}{p_S(n)}-\epsilon_0}\right)\right]^{\sigma_jp_S(n)}.
\end{align}

Plugging \eqref{Seq-Lower-Bound} into the iteration frame \eqref{Iteration-Frame}, we arrive after taking into consideration \eqref{auxiliary estimate} at
\begin{align}\label{Cnew-03}
\ml{U}(t)&\geqslant\frac{C_02^{-(2j+3)}M_j^{p_S(n)}}{3\ell_{2j+2}(a_jp_S(n)+1)}(\log\langle t\rangle)^{-(p_S(n)-1)-b_jp_S(n)}\left[\log\left(\frac{t}{\ell_{2j+2}t_0}\right)\right]^{a_jp_S(n)+1}\notag\\
&\quad\,\times\left[\mu\left(C_2(R+t)^{-\frac{n-1}{2}-\frac{1}{p_S(n)}-\epsilon_0}\right)\right]^{\sigma_jp_S(n)}[\ml{I}_{\mu}(t)]^{\frac{1}{p_S(n)}},
\end{align}
where we introduce
\begin{align*}
	\ml{I}_{\mu}(t)&:=\mu\left(C_1 \langle t\rangle^{-\frac{n-1}{2}-\frac{1}{p_S(n)}}\langle t\rangle^{-1}\int_0^t (t-s) \langle s\rangle^{-1} \frac{[\mathcal{U}(s)]^{p_S(n)}}{\left(\log\langle s\rangle\right)^{p_S(n)-1}}    \mathrm{d}s\right)
\end{align*} and estimate
\begin{align*}\ml{I}_{\mu}(t) &\geqslant \mu\bigg(\frac{C_12^{-(2j+3)}M_j^{p_S(n)}}{3\ell_{2j+2}(a_jp_S(n)+1)}\langle t\rangle^{-\frac{n-1}{2}-\frac{1}{p_S(n)}}(\log\langle t\rangle)^{-(p_S(n)-1)-b_jp_S(n)}\\
&\qquad\ \ \ \times\left[\mu\left(C_2(R+t)^{-\frac{n-1}{2}-\frac{1}{p_S(n)}-\epsilon_0}\right)\right]^{\sigma_jp_S(n)}\left[\log\left(\frac{t}{\ell_{2j+2}t_0}\right)\right]^{a_jp_S(n)+1}\bigg).
\end{align*}
Taking a suitably large $t_0$ such that for $t\geqslant \ell_{2j+2}t_0$, recalling the conclusion \eqref{Special_Assum} from  our assumption \eqref{Assumption-Blow-up} and the condition \eqref{Cnew-02}, then the following inequality holds:
\begin{align}\label{Verify-Chen}
	 &\frac{(\frac{c_l}{2})^{\sigma_jp_S(n)}C_1C_2^{-1}2^{-(2j+3)}M_j^{p_S(n)}}{3\ell_{2j+2}(a_jp_S(n)+1)}\left[\log\left(\frac{t}{\ell_{2j+2}t_0}\right)\right]^{a_jp_S(n)+1}\notag\\
	 &\geqslant \langle t\rangle^{\frac{n-1}{2}+\frac{1}{p_S(n)}}(R+t)^{-\frac{n-1}{2}-\frac{1}{p_S(n)}-\epsilon_0}(\log\langle t\rangle)^{p_S(n)-1+b_jp_S(n)}\left[\log\left(C_2^{-1}(R+t)^{\frac{n-1}{2}+\frac{1}{p_S(n)}+\epsilon_0}\right)\right]^{\sigma_j}
\end{align}
for a fixed $j$, because the polynomial decay factor  $(R+t)^{-\epsilon_0}$ plays from the point of decay a dominant role in comparison with all logarithmic factors on the right-hand side of the last inequality. Later, we will verify the last inequality \eqref{Verify-Chen} uniformly for all $j\gg1$ by choosing suitable parameters $a_j,b_j,\sigma_j$ and estimating $M_j$. According to the last lower bounds estimates, it provides
\begin{align}\label{Cnew-04}
	\ml{I}_{\mu}(t)\geqslant \mu\left(C_2(R+t)^{-\frac{n-1}{2}-\frac{1}{p_S(n)}-\epsilon_0}\right)
\end{align}
for $t\geqslant \ell_{2j+2}t_0$ with a suitably large $t_0\gg1$. Summarizing the above estimates \eqref{Cnew-03} as well as \eqref{Cnew-04}, we claim the lower bound estimate
\begin{align*}
	\ml{U}(t)&\geqslant\frac{C_02^{-(2j+3)}M_j^{p_S(n)}}{3\ell_{2j+2}(a_jp_S(n)+1)}(\log\langle t\rangle)^{-(p_S(n)-1)-b_jp_S(n)}\left[\log\left(\frac{t}{\ell_{2j+2}t_0}\right)\right]^{a_jp_S(n)+1}\\
	&\quad\,\times\left[\mu\left(C_2(R+t)^{-\frac{n-1}{2}-\frac{1}{p_S(n)}-\epsilon_0}\right)\right]^{\sigma_jp_S(n)+\frac{1}{p_S(n)}}
\end{align*}
for $t\geqslant \ell_{2j+2}t_0$. In other words, we have proved \eqref{Seq-Lower-Bound} for $j+1$ provided that
\begin{align*}
&\qquad \qquad \qquad \quad \  M_{j+1}:=\frac{C_02^{-(2j+3)}}{3\ell_{2j+2}(a_jp_S(n)+1)}M_j^{p_S(n)},\\
 &a_{j+1}:=1+a_jp_S(n),\ \ b_{j+1}:=p_S(n)-1+b_jp_S(n),\ \ \sigma_{j+1}:=\frac{1}{p_S(n)}+\sigma_jp_S(n).
\end{align*}
By using recursively the relations and the initial exponents \eqref{Initial_Exponent}, we deduce
\begin{align}\label{absig}
	&a_j=\frac{p_S(n)}{p_S(n)-1}p_S^j(n)-\frac{1}{p_S(n)-1},\ \ b_j=p_S^j(n)-1,\notag\\
	&\ \ \ \ \quad  \sigma_j=\frac{p_S(n)}{p_S(n)-1}p_S^j(n)-\frac{1}{(p_S(n)-1)p_S(n)}.
\end{align}
Due to the facts that $\ell_{2j}\leqslant 2$ and $a_j\leqslant\frac{p_S(n)}{p_S(n)-1}p_S^j(n)$, the lower bound of $M_j$ can be estimated by
\begin{align*}
	M_j=\frac{C_0 2^{-(2j+1)}}{3\ell_{2j}(1+a_{j-1}p_S(n))}M^{p_S(n)}_{j-1}\geqslant C_4 [4p_S(n)]^{-j}M_{j-1}^{p_S(n)}
\end{align*}
with the constant $C_4:=\frac{C_0(p_S(n)-1)}{12 p_S(n)}>0$, which depends on $n$ but is independent of $j$. Applying the logarithmic function to both sides of the last inequality and using iteratively the resulting inequality, we may obtain
\begin{align*}
\log M_j&\geqslant p_S(n)\log M_{j-1}-j\log[4p_S(n)]+\log C_4\\
&\geqslant\cdots\geqslant p_S^j(n)\log M_0-\left(\sum\limits_{k=0}^{j-1}(j-k)p_S^k(n)\right)\log[4p_S(n)]+\left(\sum\limits_{k=0}^{j-1}p_S^k(n)\right)\log C_4\\
&\qquad \ \  \geqslant p_S^j(n)\left(\log M_0-\frac{p_S(n)\log[4p_S(n)]}{[p_S(n)-1]^2}+\frac{\log C_4}{p_S(n)-1}\right)\\
&\qquad\quad\ \ \  +\frac{j[p_S(n)-1]+p_S(n)}{[p_S(n)-1]^2}\log[4p_S(n)]-\frac{\log C_4}{p_S(n)-1},
\end{align*}
where we used the identities
\begin{align*}
\sum\limits_{k=0}^{j-1}(j-k)p^k=\frac{1}{p-1}\left(\frac{p^{j+1}-p}{p-1}-j\right)\ \ \mbox{and}\ \ \sum\limits_{k=0}^{j-1}p^k=\frac{p^j-1}{p-1}.
\end{align*}
Let us define $j_1=j_1(p_S(n))$ as the smallest non-negative integer such that
\begin{align*}
j_1\geqslant\frac{\log C_4}{\log[4p_S(n)]}-\frac{p_S(n)}{p_S(n)-1}.
\end{align*}
We may estimate
\begin{align}\label{LOGM}
	\log M_j\geqslant p_S^j(n)\left(\log M_0-\frac{p_S(n)\log[4p_S(n)]}{[p_S(n)-1]^2}+\frac{\log C_4}{p_S(n)-1}\right)=p_S^j(n)\log C_5
\end{align}
carrying the positive constant
\begin{align*}
	C_5:= M_0[4p_S(n)]^{-\frac{p_S(n)}{[p_S(n)-1]^2}} C_4^{\frac{1}{p_S(n)-1}},
\end{align*}
which depends on $n$ but is independent of $j$.

Let us now prove the inequality \eqref{Verify-Chen} uniformly for all $j\gg1$.
Due to the choices of parameters $a_j,b_j,\sigma_j$, we may estimate
\begin{align*}
&\langle t\rangle^{\frac{n-1}{2}+\frac{1}{p_S(n)}}(R+t)^{-\frac{n-1}{2}-\frac{1}{p_S(n)}-\epsilon_0}(\log\langle t\rangle)^{p_S(n)-1+b_jp_S(n)}\\
&\times\left[\log\left(C_2^{-1}(R+t)^{\frac{n-1}{2}+\frac{1}{p_S(n)}+\epsilon_0}\right)\right]^{\sigma_j} \left[\log\left(\frac{t}{\ell_{2j+2}t_0}\right)\right]^{-a_jp_S(n)-1}\\
&\qquad\leqslant C(R+t)^{-\epsilon_0}(\log\langle t\rangle)^{p_S(n)-2+b_jp_S(n)+\sigma_j-a_jp_S(n)}\\
&\qquad\leqslant C(R+t)^{-\epsilon_0}(\log\langle t\rangle)^{-1+\frac{1}{p_S(n)}}.
\end{align*}
Moreover, from the lower bound of $M_j$ in \eqref{LOGM}, we know
\begin{align*}
\frac{(\frac{c_l}{2})^{\sigma_jp_S(n)}C_1C_2^{-1}2^{-(2j+3)}M_j^{p_S(n)}}{3\ell_{2j+2}(a_jp_S(n)+1)}\geqslant\frac{\widetilde{C}}{(p_S^{j+1}(n)-1)4^j}
\left[(\frac{c_l}{2})^{\frac{p_S(n)}{p_S(n)-1}}C_5\right]^{p_S^{j+1}(n)},
\end{align*}
where $\widetilde{C}$ is a positive constant independent of $j$. Note that $C_5$ only depends on $M_0,C_0,p_S(n)$ but it is independent of $c_l$. Taking a suitably large constant $c_l$, in the last two inequalities, we notice
\begin{align*}
\frac{\widetilde{C}}{(p_S^{j+1}(n)-1)4^j}\left[\Big(\frac{c_l}{2}\Big)^{\frac{p_S(n)}{p_S(n)-1}}C_5\right]^{p_S^{j+1}(n)}\geqslant  C(R+t)^{-\epsilon_0}(\log\langle t\rangle)^{-1+\frac{1}{p_S(n)}}
\end{align*}
for any $j\gg1$. Thus, we verified \eqref{Verify-Chen} uniformly for all $j\gg1$.

Consequently, recalling the sequence of estimates \eqref{Seq-Lower-Bound} associated with \eqref{absig}, \eqref{LOGM} and $\ell_{2j}\leqslant 2$, the lower bound estimate of $\ml{U}(t)$ can be presented as follows:
\begin{align*}
\ml{U}(t)&\geqslant \exp\left(p_S^j(n)\log C_5\right)(\log\langle t\rangle)^{-p_S^j(n)+1}\left[\log\left(\frac{t}{2t_0}\right)\right]^{\frac{p_S(n)}{p_S(n)-1}p_S^j(n)-\frac{1}{p_S(n)-1}}\\
&\quad\ \times\left[\mu\left(C_2(R+t)^{-\frac{n-1}{2}-\frac{1}{p_S(n)}-\epsilon_0}\right)\right]^{\frac{p_S(n)}{p_S(n)-1}p_S^j(n)-\frac{1}{(p_S(n)-1)p_S(n)}}\\
&\geqslant\exp\left\{p_S^j(n)\log\left[C_5(\log\langle t\rangle)^{-1}\left[\log\left(\frac{t}{2t_0}\right)\right]^{\frac{p_S(n)}{p_S(n)-1}}\left[\mu\left(C_2(R+t)^{-\frac{n-1}{2}-\frac{1}{p_S(n)}-\epsilon_0}\right)\right]^{\frac{p_S(n)}{p_S(n)-1}} \right] \right\}\\
&\quad\ \times\log\langle t\rangle\left[\log\left(\frac{t}{2t_0}\right)\right]^{-\frac{1}{p_S(n)-1}}\left[\mu\left(C_2(R+t)^{-\frac{n-1}{2}-\frac{1}{p_S(n)}-\epsilon_0}\right)\right]^{-\frac{1}{(p_S(n)-1)p_S(n)}}
\end{align*}
for $t\geqslant 2t_0$ and any $j\geqslant j_1$. There exists a constant $C_6>0$ such that for  $t\geqslant t_1$ with a suitably large constant $t_1$, it holds
\begin{align*}
	C_2(R+t)^{-\frac{n-1}{2}-\frac{1}{p_S(n)}-\epsilon_0}\geqslant (C_6t)^{-\frac{n-1}{2}-\frac{1}{p_S(n)}-\epsilon_0}.
\end{align*}
Moreover, concerning suitably large $t\geqslant t_2$, the following estimates hold:
\begin{align*}
	\log\langle t\rangle=\log(3+t)\leqslant 2\log(C_6t),\ \
	\log\left(\frac{t}{2t_0}\right)\geqslant\frac{1}{2t_0}\log(C_6t),\ \ (C_6t)^{-\frac{n-1}{2}-\frac{1}{p_S(n)}-\epsilon_0}\leqslant \tau_0.
\end{align*}
For $t\geqslant\max\{2t_0,t_1,t_2\}$, the lower bound of the functional can be controlled by
\begin{align}\label{Final-Lower-Bound}
	 \ml{U}(t)&\geqslant\exp\left\{p_S^j(n)\log\left[C_52^{-1}(2t_0)^{-\frac{p_S(n)}{p_S(n)-1}}[\log(C_6t)]^{\frac{1}{p_S(n)-1}}\left[\mu\left((C_6t)^{-\frac{n-1}{2}-\frac{1}{p_S(n)}-\epsilon_0}\right)\right]^{\frac{p_S(n)}{p_S(n)-1}} \right] \right\}\notag\\
	&\quad\ \times\log\langle t\rangle\left[\log\left(\frac{t}{2t_0}\right)\right]^{-\frac{1}{p_S(n)-1}}\left[\mu\left(C_2(R+t)^{-\frac{n-1}{2}-\frac{1}{p_S(n)}-\epsilon_0}\right)\right]^{-\frac{1}{(p_S(n)-1)p_S(n)}}.
\end{align}
Recalling our assumption \eqref{Assumption-Blow-up} it follows that there exists a positive, continuous function $\kappa=\kappa(\tau)$ such that
\begin{align}\label{Property-kappa}
	\lim\limits_{\tau\to0^+}\kappa(\tau)\in [c_l,+\infty]\ \ \mbox{and}\ \ \mu(\tau) =\left(\log\frac{1}{\tau}\right)^{-\frac{1}{p_S(n)}}\kappa(\tau).
\end{align}
When $t\geqslant\max\{2t_0,t_1,t_2\}$, the crucial part in \eqref{Final-Lower-Bound} is expressed by
\begin{align*}
	 \log(C_6t)\left[\mu\left((C_6t)^{-\frac{n-1}{2}-\frac{1}{p_S(n)}-\epsilon_0}\right)\right]^{p_S(n)}=\left(\frac{n-1}{2}+\frac{1}{p_S(n)}+\epsilon_0\right)^{-1}\left[\kappa\left((C_6t)^{-\frac{n-1}{2}-\frac{1}{p_S(n)}-\epsilon_0}\right)\right]^{p_S(n)}
\end{align*}
and the lower bound estimate turns into
\begin{align*}
		 \ml{U}(t)&\geqslant\exp\left\{p_S^j(n)\log\left[C_7\,\left[\kappa\left((C_6t)^{-\frac{n-1}{2}-\frac{1}{p_S(n)}-\epsilon_0}\right)\right]^{\frac{p_S(n)}{p_S(n)-1}} \right] \right\}\log\langle t\rangle\left[\log\left(\frac{t}{2t_0}\right)\right]^{-\frac{1}{p_S(n)-1}}\\
		&\quad\ \times\left[\mu\left(C_2(R+t)^{-\frac{n-1}{2}-\frac{1}{p_S(n)}-\epsilon_0}\right)\right]^{-\frac{1}{(p_S(n)-1)p_S(n)}}
\end{align*}
with a suitable constant $C_7>0$ independent of $j$. Let us take account of the property \eqref{Property-kappa} of $\kappa=\kappa(\tau)$, namely,
\begin{align*}
\kappa(\tau)>\frac{c_l}{2}\ \ \mbox{when}\ \ 0<\tau\ll 1	
\end{align*}
 with a suitably large constant $c_l>2(100/C_7)^{[p_S(n)-1]/p_S(n)}$, where $C_7$ only depends on $M_0$, $C_0$, $p_S(n)$, $n$ and is independent of $j$. Then for large time $t\geqslant\max\{2t_0,t_1,t_2\}$, we find that in the last estimate
\begin{align*}
\log\left[C_7[\kappa(\tau)]^{\frac{p_S(n)}{p_S(n)-1}}\right]>\log\left[C_7(c_l/2)^{\frac{p_S(n)}{p_S(n)-1}}\right]>1
\end{align*}
for $0<\tau=(C_6t)^{-\frac{n-1}{2}-\frac{1}{p_S(n)}-\epsilon_0}\ll 1$. Finally, taking the limit as $j\to+\infty$ in the above estimate the lower bound for $\ml{U}(t)$ blows up in finite time. This completes the proof of Theorem \ref{Thm-Blow-up}. \hfill $\Box$

\section{Global (in time) existence of radial solutions in three dimensions}\label{Section-GESDS}
\setcounter{equation}{0}
$\ \ \ \ $Firstly, by introducing a polynomial-logarithmic type weighted Banach space, we will prepare uniform bounded $L^{\infty}$ estimates of the radial solution to the three dimensional free wave equation in Subsection \ref{Sub-4.1}. Then, the philosophy of the proof for Theorem \ref{Thm-GESDS} and its key tool will be stated in Subsection \ref{Sub-4.2}. We will demonstrate the global (in time) existence result in Subsection \ref{Sub-4.3} by applying a refined  analysis  in the $(t,r)$-plane to estimate the nonlinear terms.
\subsection{Preliminary and weighted $L^{\infty}_tL^{\infty}_r$ estimates for the linearized model}\label{Sub-4.1}
$\ \ \ \ $As preparations for studying nonlinear models, we will state some polynomial-logarithmic type weighted $L^{\infty}_tL^{\infty}_r$ estimates  for the linear wave equation in the radial case. Let us first extend initial data $u_0(r)$ and $u_1(r)$ by even reflections, namely,
\begin{align*}
	u_0(-r)=u_0(r)\ \ \mbox{and}\ \ u_1(-r)=u_1(r)\ \ \mbox{for}\ \ r<0.
\end{align*}
Note that our assumptions $u_0\in\ml{C}^2$ and $u_0$ radially symmetric ensure $u_0'(0)=0$. Due to our interest of radial solutions and the application of even reflections, we may rewrite the semilinear Cauchy problem \eqref{Semilinear-Wave-Modulus} in three dimensions as
\begin{align}\label{G1}
	\begin{cases}
		\displaystyle{u_{tt}-u_{rr}-\frac{2}{r}u_r=|u|^{p_S(3)}\mu(|u|),}&r\in\mb{R},\ t>0,\\
		u(0,r)=u_0(r),\ \ u_t(0,r)=u_1(r),&r\in\mb{R}.
	\end{cases}
\end{align}

 Now we turn our focus to the linear model with vanishing right-hand side and the same initial data as those of \eqref{G1}, namely,
\begin{align}\label{Linearized-v}
	\begin{cases}
	\displaystyle{v_{tt}-v_{rr}-\frac{2}{r}v_r=0,}&r\in\mb{R},\ t>0,\\
	v(0,r)=u_0(r),\ \ v_t(0,r)=u_1(r),&r\in\mb{R}.
\end{cases}
\end{align}
Let us recall $u_0\in\ml{C}^2_0$ as well as $u_1\in\ml{C}^1_0$. According to the well-known d'Alembert's formula, the solution to the linear Cauchy problem \eqref{Linearized-v} can be represented as follows:
\begin{align}\label{Rep-linearized}
	v(t,r)&=\frac{\partial}{\partial t}\left(\,\int_{-1}^1H_{u_0}(t+r\sigma)\mathrm{d}\sigma\right)+\int_{-1}^1H_{u_1}(t+r\sigma)\mathrm{d}\sigma\notag\\
	&=\frac{1}{2r}\big((t+r)u_0(t+r)-(t-r)u_0(t-r)\big)+\frac{1}{r}\int_{t-r}^{t+r}H_{u_1}(\rho)\mathrm{d}\rho,
\end{align}
where we denoted
\begin{align*}
	H_{u_j}(\rho):=\frac{\rho}{2}u_j(\rho)\ \ \mbox{with}\ \ j=0,1.
\end{align*}
Its proof is standard by taking the new variable $rv(t,r)$ and the representation of solution for the one dimensional free wave equation.

Motivated by the papers \cite{Asakura=1986,Kubo=1997,Dabbicco-Lucente-Reissig=2015}, we are able to derive some decay estimates in some weighted $L^{\infty}_t L^{\infty}_r$ spaces for radial solutions of the linear Cauchy problem \eqref{Linearized-v}. In order to overcome some difficulties from the influence of modulus of continuity when we consider the nonlinear model \eqref{G1}, we will include an additional logarithmic type weighted function in the solution space. To be specific, we introduce the Banach space
\begin{align*}
	X_{\kappa}:=\big\{v\in\ml{C}([0,+\infty)\times\mb{R}):\ v \mbox{ is even in }r \ \ \mbox{and}\ \ \|v\|_{X_{\kappa}}<+\infty\big\}
\end{align*}
 with a polynomial-logarithmic type weighted norm
\begin{align*}
	\|v\|_{X_{\kappa}}:=\sup\limits_{t\geqslant 0,\ r\in\mb{R}}\big(\omega(\langle t-|r|\rangle)\langle t+|r|\rangle\langle t-|r|\rangle^{\kappa-1}|v(t,r)|\big)\ \ \mbox{with}  \ \ \kappa:=1+\frac{1}{p_S(3)}.
\end{align*}
Here,  the new weighted factor is defined by
\begin{align}\label{Weighted-function}
	\omega(\tau):=(\log\tau)^{\frac{1}{p_S(3)}} \ \ \mbox{for any}\ \ \tau \geqslant 3.
\end{align}
 Then, we have the next result for bounded estimates in the Banach space $X_{\kappa}$.
\begin{prop}\label{Prop-linear-decay}
	Let $u_0\in \ml{A}_{\kappa}\cap\ml{C}^2$ and $u_1\in \ml{B}_{\kappa+1}\cap \ml{C}^1$ with $\kappa>1$. Then, the following estimate holds:
	\begin{align*}
		\|v\|_{X_{\kappa}}\lesssim
		\|u_0\|_{\ml{A}_{\kappa}}+\|u_1\|_{\ml{B}_{\kappa+1}},
	\end{align*}
	where the Banach spaces for initial data are defined as follows:
	\begin{align*}
		\ml{A}_{\kappa}&:=\big\{h\in\ml{C}^1:\ h\mbox{ is an even function and }\|h\|_{\ml{A}_{\kappa}}<+\infty  \big\},\\
		\ml{B}_{\kappa}&:=\big\{h\in\ml{C}:\ h\mbox{ is an even function and }\|h\|_{\ml{B}_{\kappa}}<+\infty  \big\},
	\end{align*}
	carrying the corresponding norms
	\begin{align*}
		\|h\|_{\ml{A}_{\kappa}}&:=\sup\limits_{r\geqslant 0}\big(\omega(\langle r\rangle)\langle r\rangle^{\kappa}|h(r)|\big)+\sup\limits_{r\geqslant0}\big(\omega(\langle r\rangle)\langle r\rangle^{\kappa+1}|h'(r)|\big),\\
		\|h\|_{\ml{B}_{\kappa}}&:=\sup\limits_{r\geqslant 0}\big(\omega(\langle r\rangle)\langle r\rangle^{\kappa}|h(r)|\big).
	\end{align*} In the above, we used the property to be even for the functions $\langle r\rangle$ and $|h(r)|,|h'(r)|$ so that we just need to consider $r\geqslant0$ in these norms.
\end{prop}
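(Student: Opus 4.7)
The plan is to insert the explicit d'Alembert representation \eqref{Rep-linearized} for $v$ into the target norm and, via a region-wise analysis in the $(t,r)$-plane, bound both terms by the right-hand side of the claimed estimate. Thanks to the even extensions of $u_0$ and $u_1$ it suffices to treat $r\geqslant 0$. Abbreviating $\alpha:=t-r$ and $\beta:=t+r$, the target becomes $|v(t,r)|\lesssim (\omega(\langle\alpha\rangle)\langle\beta\rangle\langle\alpha\rangle^{\kappa-1})^{-1}(\|u_0\|_{\ml{A}_{\kappa}}+\|u_1\|_{\ml{B}_{\kappa+1}})$, where $\langle\alpha\rangle=3+|\alpha|$ and $\langle\beta\rangle=3+\beta$.

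The delicate region is $r\leqslant t/2$, in which the explicit factor $1/r$ in \eqref{Rep-linearized} threatens to blow up as $r\to 0^+$. I would dissolve this problem for the $u_0$-contribution by rewriting it through integration by parts as
\begin{align*}
\frac{1}{2r}\bigl[(t+r)u_0(t+r)-(t-r)u_0(t-r)\bigr]=\frac{1}{2r}\int_{t-r}^{t+r}\bigl(u_0(\rho)+\rho\, u_0'(\rho)\bigr)\mathrm{d}\rho,
\end{align*}
whose integrand is uniformly dominated by $2\|u_0\|_{\ml{A}_{\kappa}}(\omega(\langle\rho\rangle)\langle\rho\rangle^{\kappa})^{-1}$, by simultaneously using both terms in the definition of $\|\cdot\|_{\ml{A}_{\kappa}}$. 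In this regime $\langle\rho\rangle\simeq\langle t\rangle\simeq\langle\alpha\rangle\simeq\langle\beta\rangle$ uniformly for $\rho\in[\alpha,\beta]$, so the factor $2r$ in the denominator is cancelled by the length $2r$ of integration, leaving precisely the target bound. The $u_1$-contribution in this region is treated analogously using the pointwise estimate $|H_{u_1}(\rho)|\leqslant\tfrac{1}{2}\|u_1\|_{\ml{B}_{\kappa+1}}(\omega(\langle\rho\rangle)\langle\rho\rangle^{\kappa})^{-1}$, which follows from $|\rho|\leqslant\langle\rho\rangle$.

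In the complementary region $r>t/2$ (the bounded remainder $r\leqslant 1$ being trivial by continuity), one has $\langle\beta\rangle\simeq r$, so the factor $1/r$ is harmless. I would keep the direct form for the $u_0$-part and bound each endpoint contribution by $|(t\pm r)u_0(t\pm r)|\lesssim \|u_0\|_{\ml{A}_{\kappa}}(\omega(\langle t\pm r\rangle)\langle t\pm r\rangle^{\kappa-1})^{-1}$, then invoke the monotonicity inequalities $\omega(\langle\beta\rangle)\geqslant\omega(\langle\alpha\rangle)$ and $\langle\beta\rangle^{\kappa-1}\geqslant\langle\alpha\rangle^{\kappa-1}$ (valid since $\langle\beta\rangle\geqslant\langle\alpha\rangle$). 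For the $u_1$-part, when $\alpha\geqslant 0$ I would estimate $|\int_\alpha^\beta H_{u_1}(\rho)\mathrm{d}\rho|$ directly via $\int_\alpha^{\infty}\langle\rho\rangle^{-\kappa}\mathrm{d}\rho\lesssim \langle\alpha\rangle^{-(\kappa-1)}$ (this is precisely where the assumption $\kappa>1$ enters) combined with the monotonicity of $\omega$. When $\alpha<0$, the oddness of $H_{u_1}$ (a consequence of the evenness of $u_1$) collapses the integral onto $\int_{r-t}^{r+t}H_{u_1}(\rho)\mathrm{d}\rho$, an integral over a subset of $[0,+\infty)$ that reduces to the previous case with $|\alpha|$ in place of $\alpha$.

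The main obstacle is careful bookkeeping across the several cases rather than any conceptual hurdle: the additional logarithmic weight $\omega(\langle\cdot\rangle)$ is monotonically increasing on $[3,+\infty)$, so it behaves in exactly the same way as the polynomial factor $\langle\cdot\rangle^{\kappa}$ under the monotonicity and integrability manipulations, and does not interfere with the polynomial-weighted analysis of \cite{Asakura=1986,Kubo=1997,Dabbicco-Lucente-Reissig=2015}. Taking the supremum of the resulting pointwise bounds over $(t,r)$ yields the announced inequality.
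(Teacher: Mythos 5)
Your argument is correct and follows the same region decomposition as the paper's proof (split at $t\gtrless 2r$, with a compact remainder). There are two places where you deviate, one cosmetic and one substantive, and the substantive one is actually an improvement.

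The cosmetic change: in the interior zone $r\leqslant t/2$ you write the $u_0$-difference as $\frac{1}{2r}\int_{t-r}^{t+r}(u_0(\rho)+\rho u_0'(\rho))\,\mathrm{d}\rho$ (fundamental theorem of calculus), whereas the paper invokes the mean value theorem, writing $H_{u_0}(t+r)-H_{u_0}(t-r)=2rH'_{u_0}(\zeta)$ for some $\zeta\in(t-r,t+r)$ and then bounding $|H'_{u_0}(\zeta)|$. Both use exactly both halves of $\|\cdot\|_{\ml{A}_{\kappa}}$ and both cancel the $1/r$ singularity by producing a factor of length $2r$. This is an equivalent maneuver.

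The substantive difference is your handling of the $u_1$-integral when $\alpha=t-r<0$. The paper bounds $|\int_{t-r}^{t+r}H_{u_1}(\rho)\,\mathrm{d}\rho|\leqslant\int_{t-r}^{t+r}|H_{u_1}(\rho)|\,\mathrm{d}\rho$ and then applies the monotone bound $[\omega(\langle\rho\rangle)]^{-1}\lesssim[\omega(\langle t-|r|\rangle)]^{-1}$ together with $\int_{t-|r|}^{t+|r|}\langle\rho\rangle^{-\kappa}\mathrm{d}\rho\leqslant\frac{\langle t-|r|\rangle^{1-\kappa}-\langle t+|r|\rangle^{1-\kappa}}{\kappa-1}$. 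Both of these intermediate steps are valid only for $t\geqslant|r|$; for $t<|r|$ the integrand near $\rho=0$ violates the monotone bound (there $\langle\rho\rangle\approx 3$ while $\langle t-|r|\rangle$ can be large), and the closed form for the integral of $\langle\rho\rangle^{-\kappa}$ has a different expression once the interval straddles the origin. Taking $t=0$ and $r$ large shows that the absolute-value bound cannot be matched against the weight $\omega(\langle t-|r|\rangle)\langle t+|r|\rangle\langle t-|r|\rangle^{\kappa-1}$. Your route — exploiting that $H_{u_1}$ is odd (from the evenness of $u_1$) to collapse $\int_{t-r}^{t+r}H_{u_1}$ onto $\int_{r-t}^{r+t}H_{u_1}$, an integral over a subinterval of $[0,+\infty)$ — retains the necessary cancellation and reduces to the $\alpha\geqslant 0$ computation. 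This is the correct way to treat the exterior zone and is more careful than what the paper writes.

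One small thing to tighten: "trivial by continuity" in the compact corner $r\leqslant 1$, $t\leqslant 2r$ is not quite enough, because the estimate must be proportional to $\|u_0\|_{\ml{A}_{\kappa}}+\|u_1\|_{\ml{B}_{\kappa+1}}$, not merely finite. Mere continuity of $v$ on the compact region gives a finite supremum but not this homogeneity in the data. The fix is cheap: the same FTC rewriting you use for $r\leqslant t/2$ extends verbatim to the whole region $r\leqslant 1$, since all weights there are uniformly comparable to constants; note this explicitly rather than appealing to continuity.
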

\begin{proof}
By using the definitions of $\|u_0\|_{\ml{A}_{\kappa}}$ and $\|u_1\|_{\ml{B}_{\kappa+1}}$, respectively, we may estimate
\begin{align*}
	|H_{u_0}(\rho)|&\lesssim
	[\omega(\langle \rho\rangle)]^{-1}\langle\rho\rangle^{1-\kappa}\|u_0\|_{\ml{A}_{\kappa}},\\
	|H'_{u_0}(\rho)|&\lesssim |u_0(\rho)|+\langle\rho\rangle|u_0'(\rho)|\lesssim [\omega(\langle \rho\rangle)]^{-1}\langle \rho\rangle^{-\kappa}\|u_0\|_{\ml{A}_{\kappa}},\\
	|H_{u_1}(\rho)|&\lesssim
	[\omega(\langle \rho\rangle)]^{-1}\langle\rho\rangle^{-\kappa}\|u_1\|_{\ml{B}_{\kappa+1}}.
\end{align*}
Let us employ the triangle inequality to observe
\begin{align}\label{Tool_01}
	\langle t-|r|\rangle=3+\big|t-|r|\big|\leqslant3+|t\pm r|=\langle t\pm r\rangle.
\end{align}
 According to the solution formula \eqref{Rep-linearized}, one may derive
\begin{align*}
	|v(t,r)|&\leqslant\frac{1}{|r|}\big(|H_{u_0}(t+r)|+|H_{u_0}(t-r)|\big)+\frac{1}{|r|}\int_{t-|r|}^{t+|r|}|H_{u_1}(\rho)|\mathrm{d}\rho\\
	&\lesssim\frac{1}{|r|}\Big([\omega(\langle t+r\rangle)]^{-1}\langle t+r\rangle^{1-\kappa}+[\omega(\langle t-r\rangle)]^{-1}\langle t-r\rangle^{1-\kappa}\Big)\left(\|u_0\|_{\ml{A}_{\kappa}}+\|u_1\|_{\ml{B}_{\kappa+1}}\right)\\
	&\quad+\frac{1}{|r|}\int_{t-|r|}^{t+|r|}[\omega(\langle \rho\rangle)]^{-1}\langle \rho\rangle^{-\kappa}\mathrm{d}\rho\left(\|u_0\|_{\ml{A}_{\kappa}}+\|u_1\|_{\ml{B}_{\kappa+1}}\right)\\
	&\lesssim\left(\frac{1}{|r|}[\omega(\langle t-|r|\rangle)]^{-1}\langle t-|r|\rangle^{1-\kappa}+\frac{1}{|r|}\int_{t-|r|}^{t+|r|}[\omega(\langle \rho\rangle)]^{-1}\langle \rho\rangle^{-\kappa}\mathrm{d}\rho\right)\left(\|u_0\|_{\ml{A}_{\kappa}}+\|u_1\|_{\ml{B}_{\kappa+1}}\right),
\end{align*}
because of $\kappa>1$, where we used the relation \eqref{Tool_01}. Let us separate our next consideration into two situations with respect to the interplay between $t$ and $|r|$.
	\begin{itemize}
	\item When $t\geqslant 2|r|$, since the integrand takes its maximum for $\rho=t-|r|$ and $\langle t+|r|\rangle\approx\langle t-|r|\rangle$, thanks to the representation \eqref{Rep-linearized}, we may estimate
	\begin{align*}
		&\omega(\langle t-|r|\rangle)\langle t+|r|\rangle\langle t-|r|\rangle^{\kappa-1}|v(t,r)|\\
		&\qquad\lesssim\frac{\omega(\langle t-|r|\rangle)\langle t+|r|\rangle\langle t-|r|\rangle^{\kappa-1}}{|r|}|H_{u_0}(t+r)-H_{u_0}(t-r)|+\frac{\langle t+|r|\rangle}{\langle t-|r|\rangle}\|u_1\|_{\ml{B}_{\kappa+1}}\\
		&\qquad\lesssim \omega(\langle t-|r|\rangle)\langle t+|r|\rangle\langle t-|r|\rangle^{\kappa-1}|H'_{u_0}(\zeta)|+\|u_1\|_{\ml{B}_{\kappa+1}}\\
		&\qquad\lesssim \|u_0\|_{\ml{A}_{\kappa}}+\|u_1\|_{\ml{B}_{\kappa+1}},
	\end{align*}
	where we employed the mean value theorem with $\zeta\in(t-r,t+r)$, \eqref{Tool_01} and
	\begin{align*}
		|H'_{u_0}(\zeta)|&\lesssim [\omega(\langle\zeta\rangle)]^{-1}\langle\zeta\rangle^{-\kappa}\|u_0\|_{\ml{A}_{\kappa}}\lesssim [\omega(\langle t-|r|\rangle)]^{-1}\langle t-|r|\rangle^{-\kappa}\|u_0\|_{\ml{A}_{\kappa}}.
	\end{align*}
	\item When $t\leqslant 2|r|$, we have some further discussions.
	\begin{itemize}
		\item If $|r|\leqslant 1$, since $\langle t-|r|\rangle\approx\langle t+|r|\rangle\approx 3$ and the compact $(t,r)$-zone, then we get
		\begin{align*}
			\omega(\langle t-|r|\rangle)\langle t+|r|\rangle\langle t-|r|\rangle^{\kappa-1}|v(t,r)|\lesssim\|u_0\|_{\ml{A}_{\kappa}}+\|u_1\|_{\ml{B}_{\kappa+1}},
		\end{align*}
		whose approach is the same as the one for $t\geqslant 2|r|$.
		\item If $|r|\geqslant 1$, since $\langle t+|r|\rangle\leqslant 3\langle r\rangle$ and $|r|\approx \langle r\rangle$, then we get
		\begin{align*}
			&\omega(\langle t-|r|\rangle)\langle t+|r|\rangle\langle t-|r|\rangle^{\kappa-1}|v(t,r)|\\
			&\qquad\lesssim \left(\frac{\langle t+|r|\rangle}{|r|}+\frac{\langle t+|r|\rangle}{|r|}\langle t-|r|\rangle^{\kappa-1}\int_{t-|r|}^{t+|r|}\langle\rho\rangle^{-\kappa}\mathrm{d}\rho\right)\left(\|u_0\|_{\ml{A}_{\kappa}}+\|u_1\|_{\ml{B}_{\kappa+1}}\right) \\
			&\qquad\lesssim \left(1+\frac{\langle t-|r|\rangle^{\kappa-1}}{\kappa-1}\left(\langle t-|r|\rangle^{1-\kappa}-\langle t+|r|\rangle^{1-\kappa}\right)\right)\left(\|u_0\|_{\ml{A}_{\kappa}}+\|u_1\|_{\ml{B}_{\kappa+1}}\right)\\
			&\qquad\lesssim \|u_0\|_{\ml{A}_{\kappa}}+\|u_1\|_{\ml{B}_{\kappa+1}},
		\end{align*}
	because of $\kappa>1$.
	\end{itemize}
\end{itemize}
In other words, we arrive at
\begin{align*}
	\left\|\omega(\langle t-|r|\rangle)\langle t+|r|\rangle\langle t-|r|\rangle^{\kappa-1}v(t,r)\right\|_{L^{\infty}([0,+\infty)\times\mb{R})}\lesssim \|u_0\|_{\ml{A}_{\kappa}}+\|u_1\|_{\ml{B}_{\kappa+1}},
\end{align*}
which completes our proof.
\end{proof}

\subsection{Philosophy of our approach}\label{Sub-4.2}
$\ \ \ \ $By Duhamel's principle, the solution to the inhomogeneous linear Cauchy problem with $F=F(t,r)$ as a source term and vanishing data is given by
\begin{align*}
	 v^{\mathrm{inh}}(t,r)&=\int_0^t\int_{-1}^1H_F[s](t-s+r\sigma)\mathrm{d}\sigma\mathrm{d}s=\frac{1}{r}\int_0^t\int_{t-s-r}^{t-s+r}H_F[s](\rho)\mathrm{d}\rho\mathrm{d}s
\end{align*}
with $H_F[s](\rho):=\frac{\rho}{2}F(s,\rho)$.
Concerning the semilinear Cauchy problem \eqref{G1}, inspired by the last representation, we introduce
\begin{align*}
	 Lu(t,r):=\int_0^t\int_{-1}^1H_u[s](t-s+r\sigma)\mathrm{d}\sigma\mathrm{d}s=\frac{1}{r}\int_0^t\int_{t-s-r}^{t-s+r}H_u[s](\rho)\mathrm{d}\rho\mathrm{d}s
\end{align*}
with the nonlinear term
\begin{align*}
	H_u[s](\rho):=\frac{\rho}{2}|u(s,\rho)|^{p_S(3)}\mu\big(|u(s,\rho)|\big).
\end{align*}

In view of Duhamel's principle as well as the above setting, we expect that if we find $u\in X_{\kappa}$ with $\kappa>1$ such that
\begin{align*}
	u(t,r)&=v(t,r)+Lu(t,r)\\
	&=\frac{\partial}{\partial t}\left(\,\int_{-1}^1H_{u_0}(t+r\sigma)\mathrm{d}\sigma\right)+\int_{-1}^1H_{u_1}(t+r\sigma)\mathrm{d}\sigma+\frac{1}{r}\int_0^t\int_{t-s-r}^{t-s+r}H_u[s](\rho)\mathrm{d}\rho\mathrm{d}s,
\end{align*}
then $u=u(t,r)$ is a solution of the Cauchy problem \eqref{G1}. Note that $v\in X_{\kappa}$ when $\kappa>1$ for the linear Cauchy problem \eqref{Linearized-v} via Proposition \ref{Prop-linear-decay} and initial data $u_0$, $u_1$ with compact support. In order to prove Theorem \ref{Thm-GESDS}, in the subsequent part, we will demonstrate the following two crucial inequalities:
\begin{align}
	\|Lu\|_{X_{\kappa}}&\lesssim \|u\|_{X_{\kappa}}^{p_S(3)},\label{Crucial-01}\\
	\|Lu-L\tilde{u}\|_{X_{\kappa}}&\lesssim \|u-\tilde{u}\|_{X_{\kappa}}\left(\|u\|_{X_{\kappa}}^{p_S(3)-1}+\|\tilde{u}\|_{X_{\kappa}}^{p_S(3)-1} \right),\label{Crucial-02}
\end{align}
for any $u,\tilde{u}\in X_{\kappa}$, under some conditions for the modulus of continuity. Let us recall $u_0=\varepsilon\bar{u}_0$ and $u_1=\varepsilon\bar{u}_1$. Combining \eqref{Crucial-01} with Proposition \ref{Prop-linear-decay}, we immediately claim
\begin{align}\label{Crucial-03}
\|v+Lu\|_{X_{\kappa}}\lesssim\varepsilon\left(\|\bar{u}_0\|_{\ml{A}_{\kappa}}+\|\bar{u}_1\|_{\ml{B}_{\kappa+1}}\right)+ \|u\|_{X_{\kappa}}^{p_S(3)}.
\end{align}
Providing that we take a small parameter $0<\varepsilon<\varepsilon_0\ll1$ and compactly supported initial data, we combine \eqref{Crucial-03} and \eqref{Crucial-02} to claim that there exists a global (in time) small data radial solution $u\in X_{\kappa}$ by using  Banach's fixed point theorem.

From the condition \eqref{GESDS-Condition},  there exists a positive and continuous  function $\bar{\kappa}=\bar{\kappa}(\tau)$ such that
\begin{align}\label{Nee-con}
	\mu(\tau)=\left(\log\frac{1}{\tau}\right)^{-\frac{1}{p_S(3)}}\bar{\kappa}(\tau)\ \ \mbox{and}\ \ \lim\limits_{\tau\to0^+}\bar{\kappa}(\tau)=0.
\end{align}
Furthermore, the condition \eqref{Speical-mu} means that the above function $\bar{\kappa}$ satisfies the next additional condition:
\begin{align}\label{Condition-2}
\bar{\kappa}(\tau)\log\log\frac{1}{\tau}\lesssim 1\ \ \mbox{when}\ \ \tau\in(0,\tau_0].
\end{align}
Before starting our proof, let us introduce a crucial lemma.
\begin{lemma}\label{Lemma-Key-Integral}
	Let us consider a modulus of continuity $\mu=\mu(\tau)$ with $\mu(0)=0$ satisfying the conditions \eqref{GESDS-Condition} and \eqref{Speical-mu}. Let us recall the weighted factor via \eqref{Weighted-function}. Then, the integral
	\begin{align*}
		 I(\xi):=\int_{-|\xi|}^{|\xi|}[\omega(\langle\eta\rangle)]^{-p_S(3)}\langle\xi+\eta\rangle\langle\eta\rangle^{-1}
\mu\left(\varepsilon_0[\omega(\langle\eta\rangle)]^{-1}\langle\xi\rangle^{-1}\langle\eta\rangle^{-\frac{1}{p_S(3)}}\right)\mathrm{d}\eta
	\end{align*}
	fulfills the following estimate:
	\begin{align*}
		I(\xi)\lesssim \langle\xi\rangle [\log(\langle\xi\rangle)]^{-\frac{1}{p_S(3)}}(\log\log\langle\xi\rangle)\,\bar{\kappa}\left(\varepsilon_0\langle\xi\rangle^{-1}\right).
	\end{align*}
\end{lemma}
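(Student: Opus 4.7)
The plan is to exploit the monotonicity of the modulus of continuity $\mu$ in order to pull $\mu$ outside the integral as a constant in $\eta$, and then to evaluate the remaining $\langle\eta\rangle^{-1}(\log\langle\eta\rangle)^{-1}$ integral by a direct substitution. Set $p := p_S(3)$ and $\tau_*(\eta) := \varepsilon_0 [\omega(\langle\eta\rangle)]^{-1}\langle\xi\rangle^{-1}\langle\eta\rangle^{-1/p}$. The key preliminary observation is the uniform upper bound $\tau_*(\eta) \leq \varepsilon_0\langle\xi\rangle^{-1}$ for every $\eta \in [-|\xi|,|\xi|]$. Indeed, since $\langle\eta\rangle = 3+|\eta|\geq 3$ and $p>1$, both $(\log\langle\eta\rangle)^{-1/p} \leq (\log 3)^{-1/p}$ and $\langle\eta\rangle^{-1/p} \leq 3^{-1/p}$; their product is at most $1$ because $3\log 3 > 1$.

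With this in hand, the monotonicity of $\mu$ gives $\mu(\tau_*(\eta)) \leq \mu(\varepsilon_0\langle\xi\rangle^{-1})$ pointwise, so $\mu$ exits the integral. Applying the decomposition $\mu(\tau) = (\log(1/\tau))^{-1/p}\bar{\kappa}(\tau)$ from \eqref{Nee-con} at $\tau = \varepsilon_0\langle\xi\rangle^{-1}$, together with $\log(\varepsilon_0^{-1}\langle\xi\rangle) \simeq \log\langle\xi\rangle$ for $\langle\xi\rangle$ sufficiently large, one obtains $\mu(\varepsilon_0\langle\xi\rangle^{-1}) \lesssim (\log\langle\xi\rangle)^{-1/p}\bar{\kappa}(\varepsilon_0\langle\xi\rangle^{-1})$. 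Combining this with $\langle\xi+\eta\rangle \leq 2\langle\xi\rangle$ (since $|\eta|\leq |\xi|$) and the identity $[\omega(\langle\eta\rangle)]^{-p} = (\log\langle\eta\rangle)^{-1}$, one arrives at
\[
I(\xi) \lesssim \langle\xi\rangle (\log\langle\xi\rangle)^{-1/p}\bar{\kappa}(\varepsilon_0\langle\xi\rangle^{-1}) \int_{-|\xi|}^{|\xi|}\frac{d\eta}{\langle\eta\rangle\log\langle\eta\rangle}.
\]
The change of variable $u = \log(3+|\eta|)$ evaluates $\int_0^{|\xi|}[(3+\eta)\log(3+\eta)]^{-1}d\eta = \log\log\langle\xi\rangle - \log\log 3$, so by symmetry the remaining integral is $\lesssim \log\log\langle\xi\rangle$, which closes the estimate.

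The only non-trivial step is recognising the uniform upper bound $\tau_*(\eta) \leq \varepsilon_0\langle\xi\rangle^{-1}$, which is what unlocks the monotonicity argument; the rest reduces to elementary log-log manipulations. The small-$\langle\xi\rangle$ regime is handled separately in a routine way, since there $\bar{\kappa}(\varepsilon_0\langle\xi\rangle^{-1})$ remains bounded below by a positive constant while $I(\xi)$ itself is controlled by the bounded length of its domain of integration.
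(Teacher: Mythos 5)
Your proof is correct, and it is cleaner than the paper's. The paper splits $I(\xi)$ into $I_1$ over $[-|\xi|/2,|\xi|/2]$ and a boundary part $I_2$ over $[|\xi|/2,|\xi|]\cup[-|\xi|,-|\xi|/2]$, exploits the two-sided equivalence $\langle\xi\rangle\approx\langle\xi+\eta\rangle$ on the bulk to pull $\mu$ out by monotonicity (using the same inequality $[\langle\eta\rangle\log\langle\eta\rangle]^{-1/p_S(3)}<1$ that you isolate), and then shows separately that $I_2$ obeys a strictly stronger bound $\lesssim\langle\xi\rangle[\log\langle\xi\rangle]^{-1-1/p_S(3)}\bar{\kappa}(\varepsilon_0\langle\xi\rangle^{-1})$. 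You observe that the monotonicity step works \emph{uniformly} over the whole interval $|\eta|\leqslant|\xi|$, and that the one-sided bound $\langle\xi+\eta\rangle\leqslant 2\langle\xi\rangle$ suffices in place of the two-sided equivalence, so the splitting is unnecessary. What you give up is the sharper (but unneeded) bound on the boundary region; what you gain is a one-pass argument. One minor simplification to your write-up: since $\varepsilon_0<1$ one has $\log(\varepsilon_0^{-1}\langle\xi\rangle)\geqslant\log\langle\xi\rangle$ for every $\xi$, hence $\big(\log(\varepsilon_0^{-1}\langle\xi\rangle)\big)^{-1/p_S(3)}\leqslant(\log\langle\xi\rangle)^{-1/p_S(3)}$ outright, so no ``$\langle\xi\rangle$ large'' caveat or separate small-$\xi$ case is actually needed there.
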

\begin{proof}
To begin with, let us  split $I(\xi)$ into two parts
	\begin{align*}
		 I_1(\xi)&:=\int_{-\frac{|\xi|}{2}}^{\frac{|\xi|}{2}}[\omega(\langle\eta\rangle)]^{-p_S(3)}\langle\xi+\eta\rangle\langle\eta\rangle^{-1}\mu\left(\varepsilon_0[\omega(\langle\eta\rangle)]^{-1}\langle\xi\rangle^{-1}\langle\eta\rangle^{-\frac{1}{p_S(3)}}\right)\mathrm{d}\eta,\\
		 I_2(\xi)&:=\left(\int_{\frac{|\xi|}{2}}^{|\xi|}+\int_{-|\xi|}^{-\frac{|\xi|}{2}}\right)[\omega(\langle\eta\rangle)]^{-p_S(3)}\langle\xi+\eta\rangle\langle\eta\rangle^{-1}\mu\left(\varepsilon_0[\omega(\langle\eta\rangle)]^{-1}\langle\xi\rangle^{-1}\langle\eta\rangle^{-\frac{1}{p_S(3)}}\right)\mathrm{d}\eta.
	\end{align*}
	For the first integral, we use  the asymptotic behaviors $\langle\xi\rangle\approx\langle\eta+\xi\rangle\approx \langle \eta-\xi\rangle$ when $\eta\in[-\frac{|\xi|}{2},\frac{|\xi|}{2}]$ to deduce
	\begin{align}\label{Est-I1}
		 I_1(\xi)&\lesssim \langle\xi \rangle \int_{-\frac{|\xi|}{2}}^{\frac{|\xi|}{2}}\langle\eta\rangle^{-1}[\log(\langle\eta\rangle)]^{-1}\mu\left(\varepsilon_0\langle\xi\rangle^{-1}\langle\eta\rangle^{-\frac{1}{p_S(3)}}[\log(\langle\eta\rangle)]^{-\frac{1}{p_S(3)}}\right)\mathrm{d}\eta\notag\\
		 &\lesssim \langle\xi \rangle\mu\left(\varepsilon_0\langle\xi\rangle^{-1}\right) \int_{0}^{\frac{|\xi|}{2}}\langle\eta\rangle^{-1}[\log(\langle\eta\rangle)]^{-1}\mathrm{d}\eta\notag\\
		&\lesssim \langle\xi\rangle [\log(\langle\xi\rangle)]^{-\frac{1}{p_S(3)}}(\log\log\langle\xi\rangle)\,\bar{\kappa}\left(\varepsilon_0\langle\xi\rangle^{-1}\right),
	\end{align}
	where we used the condition \eqref{Nee-con} and the increasing property with $[\langle\eta\rangle\log(\langle\eta\rangle)]^{-\frac{1}{p_S(3)}}<1$.
	
	Let us turn to the first part of $I_2(\xi)$, which is denoted by $I_{2,1}(\xi)$. When $\xi\geqslant 0$, due to $\eta\in[\frac{\xi}{2},\xi]$, the equivalences $\langle\eta+\xi\rangle\approx\langle \eta\rangle\approx\langle\xi\rangle$ hold. It leads to
	\begin{align*}
		I_{2,1}(\xi)&\lesssim\langle\xi\rangle [\log(\langle\xi\rangle)]^{-1}\mu\left(\varepsilon_0\langle\xi\rangle^{-1-\frac{1}{p_S(3)}}[\log(\langle\xi\rangle)]^{-\frac{1}{p_S(3)}}\right)\\
		&\lesssim \langle\xi\rangle [\log(\langle\xi\rangle)]^{-1-\frac{1}{p_S(3)}}\,\bar{\kappa}\left(\varepsilon_0\langle\xi\rangle^{-1}\right),
	\end{align*}
because $[\langle\xi\rangle\log(\langle\xi\rangle)]^{-\frac{1}{p_S(3)}}<1$.
When $\xi\leqslant0$, the equivalences $\langle\eta-\xi\rangle\approx\langle\xi\rangle\approx\langle\eta\rangle$ are valid for $\eta\in[-\xi,-\frac{\xi}{2}]$. Then, one deduces 
	\begin{align*}
		 I_{2,1}(\xi)&\lesssim[\log(\langle\xi\rangle)]^{-1}\langle\xi\rangle^{-1}\mu\left(\varepsilon_0\langle\xi\rangle^{-1-\frac{1}{p_S(3)}}[\log(\langle\xi\rangle)]^{-\frac{1}{p_S(3)}}\right)\int^{-\frac{\xi}{2}}_{-\xi}\langle -\xi-\eta\rangle\mathrm{d}\eta\\
		&\lesssim \langle\xi\rangle [\log(\langle\xi\rangle)]^{-1-\frac{1}{p_S(3)}}\,\bar{\kappa}\left(\varepsilon_0\langle\xi\rangle^{-1}\right).
	\end{align*}
	The above two estimates are stronger than the estimate \eqref{Est-I1}. Repeating the same procedure as those for $I_{2,1}(\xi)$, by symmetry we are able to get
	\begin{align*}
		I_{2,2}(\xi)\lesssim \langle\xi\rangle [\log(\langle\xi\rangle)]^{-1-\frac{1}{p_S(3)}}\,\bar{\kappa}\left(\varepsilon_0\langle\xi\rangle^{-1}\right).
	\end{align*}
	Summarizing the above derived estimates, we complete the proof of this lemma.
\end{proof}

\subsection{Some estimates for solutions to nonlinear models: Proof of Theorem \ref{Thm-GESDS}}\label{Sub-4.3}
$\ \ \ \ $Let us consider $u\in X_{\kappa}$. From the definition of the Banach space $X_{\kappa}$ with $\kappa>1$, we obtain
\begin{align*}
	|u(s,\rho)|^{p_S(3)}\mu\big(|u(s,\rho)|\big)&\lesssim[\omega(\langle s-|\rho|\rangle)]^{-p_S(3)}\langle s+|\rho|\rangle^{-p_S(3)}\langle s-|\rho|\rangle^{-(\kappa-1)p_S(3)}\|u\|_{X_{\kappa}}^{p_S(3)}\\
	&\quad\times \mu\left([\omega(\langle s-|\rho|\rangle)]^{-1}\langle s+|\rho|\rangle^{-1}\langle s-|\rho|\rangle^{-(\kappa-1)}\|u\|_{X_{\kappa}}\right)\\
	&\lesssim[\omega(\langle s-|\rho|\rangle)]^{-p_S(3)}\langle s+|\rho|\rangle^{-p_S(3)}\langle s-|\rho|\rangle^{-(\kappa-1)p_S(3)}\|u\|_{X_{\kappa}}^{p_S(3)}\\
	&\quad\times \mu\left(\varepsilon_0[\omega(\langle s-|\rho|\rangle)]^{-1}\langle s+|\rho|\rangle^{-1}\langle s-|\rho|\rangle^{-(\kappa-1)}\right),
\end{align*}
where we assumed $\|u\|_{X_{\kappa}}\leqslant \varepsilon_0$ for some $\varepsilon_0>0$ sufficiently small.

With the aim of proving \eqref{Crucial-01}, we just need to show
\begin{align*}
	\omega(\langle t-|r|\rangle)\langle t+|r|\rangle\langle t-|r|\rangle^{\kappa-1}|Lu(t,r)|&\lesssim \|u\|_{X_{\kappa}}^{p_S(3)}.
\end{align*}
Because $Lu$ is even in $r$, we may restrict ourselves to non-negative values of $r$.
Concerning $r\geqslant0$, applying the definition of $Lu$, we get
\begin{align*}
	|Lu(t,r)|&\leqslant\frac{1}{r}\int_0^t\int_{t-s-r}^{t-s+r}|H_u[s](\rho)|\mathrm{d}\rho\mathrm{d}s\\
	&\lesssim\frac{1}{r}\int_0^t\int_{t-s-r}^{t-s+r}\langle\rho\rangle[\omega(\langle s-|\rho|\rangle)]^{-p_S(3)}\langle s+|\rho|\rangle^{-p_S(3)}\langle s-|\rho|\rangle^{-(\kappa-1)p_S(3)}\\
	&\quad\quad\underbrace{\qquad\ \ \qquad\times \mu\left(\varepsilon_0[\omega(\langle s-|\rho|\rangle)]^{-1}\langle s+|\rho|\rangle^{-1}\langle s-|\rho|\rangle^{-(\kappa-1)}\right)\mathrm{d}\rho\mathrm{d}s}_{=:I_0(t,r)}\|u\|_{X_{\kappa}}^{p_S(3)}
\end{align*}
for the case $t\geqslant r$.  In the case $t\leqslant r$, we can slightly modify the representation formulate for $Lu$. Precisely, being $H_u[s](\rho)$ an odd function with respect to $\rho$, one notices
\begin{align*}
	\int_{(t-s)-r}^{r-(t-s)}{H}_{u}[s](\rho)\mathrm{d}\rho=0.
\end{align*}
 The additivity of the integral regions shows
\begin{align*}
	Lu(t,r)=\frac{1}{r}\int_0^t\int_{r-(t-s)}^{(t-s)+r}{H}_u[s](\rho)\mathrm{d}\rho\mathrm{d}s.
\end{align*}
As a consequence, when $t\leqslant r$, we may replace $I_0(t,r)$ by
\begin{align*}
	\widetilde{I}_0(t,r)&:=\int_0^t\int_{r-(t-s)}^{r+(t-s)}\langle \rho\rangle[\omega(\langle s-|\rho|\rangle)]^{-p_S(3)}\langle s+|\rho|\rangle^{-p_S(3)}\langle s-|\rho|\rangle^{-(\kappa-1)p_S(3)}\\
	& \qquad\qquad\qquad\ \ \times \mu\left(\varepsilon_0[\omega(\langle s-|\rho|\rangle)]^{-1}\langle s+|\rho|\rangle^{-1}\langle s-|\rho|\rangle^{-(\kappa-1)}\right)\mathrm{d}\rho\mathrm{d}s.
\end{align*}
All in all, we already derived
\begin{align*}
|Lu(t,r)|\lesssim\begin{cases}
r^{-1}I_0(t,r)\|u\|_{X_{\kappa}}^{p_S(3)}&\mbox{when}\ \ t\geqslant r,\\[0.2em]
r^{-1}\widetilde{I}_0(t,r)\|u\|_{X_{\kappa}}^{p_S(3)}&\mbox{when}\ \ t\leqslant r,
\end{cases}
\end{align*}
for any $r\geqslant0$.
\medskip

\noindent\textbf{Estimates for $I_0(t,r)$ with $t\geqslant r$.} Since $|t-s-r|\leqslant t-s+r$ (we are working with $r\geqslant0$)  and the even function with respect to $\rho$ of the integrand in $I_0(t,r)$ so that
\begin{align*}
	I_0(t,r)&\leqslant 2\int_0^t\int_{\max\{0,t-s-r\}}^{t-s+r}\langle s+\rho\rangle^{-p_S(3)}\langle s-\rho\rangle^{-(\kappa-1)p_S(3)}\langle\rho\rangle[\omega(\langle s-\rho\rangle)]^{-p_S(3)}\\
	&\qquad\qquad\qquad\quad\qquad\times \mu\left(\varepsilon_0[\omega(\langle s-\rho\rangle)]^{-1}\langle s+\rho\rangle^{-1}\langle s-\rho\rangle^{-(\kappa-1)}\right)\mathrm{d}\rho\mathrm{d}s.
\end{align*}
We now perform the change of two variables
\begin{align}\label{Change_of-variable}
	\xi=s+\rho,\ \ \eta=\rho-s, \ \ \mbox{namely,}\ \ \rho=\frac{\xi+\eta}{2},\ \ s=\frac{\xi-\eta}{2}.
\end{align}
Since $\rho,s\geqslant0$, we get $|\eta|\leqslant \xi$. Furthermore, $\rho\in[\max\{0,t-s-r\},t-s+r]$ leads to
\begin{align*}
	t-r\leqslant s+\max\{0,t-s-r\}\leqslant s+\rho= \xi\leqslant t+r.
\end{align*}
Hence, recalling $\kappa:=1+\frac{1}{p_S(3)}$ in the definition of the Banach space $X_{\kappa}$, we may employ Lemma \ref{Lemma-Key-Integral} to derive
\begin{align*}
	 I_0(t,r)&\lesssim\int_{t-r}^{t+r}\langle\xi\rangle^{-p_S(3)}\int_{-\xi}^{\xi}[\omega(\langle\eta\rangle)]^{-p_S(3)}\langle\xi+\eta\rangle\langle\eta\rangle^{-1}\mu\left(\varepsilon_0[\omega(\langle\eta\rangle)]^{-1}\langle\xi\rangle^{-1}\langle\eta\rangle^{-\frac{1}{p_S(3)}}\right)\mathrm{d}\eta\mathrm{d}\xi\\
	&\lesssim \int_{t-r}^{t+r}\langle\xi\rangle^{-p_S(3)}I(\xi)\mathrm{d}\xi\\
	& \lesssim \int_{t-r}^{t+r}\langle\xi\rangle^{1-p_S(3)} [\log(\langle\xi\rangle)]^{-\frac{1}{p_S(3)}}(\log\log\langle\xi\rangle)\,\bar{\kappa}\left(\varepsilon_0\langle\xi\rangle^{-1}\right)\mathrm{d}\xi.
\end{align*}
 Thus, from the even behavior of $Lu$ with respect to $r$, taking account of $t\geqslant r\geqslant0$, we may arrive at
\begin{align*}
\omega(\langle t-r\rangle)\langle t+r\rangle\langle t-r\rangle^{\frac{1}{p_S(3)}}|Lu(t,r)|&\lesssim \omega(\langle t-r\rangle)\langle t+r\rangle\langle t-r\rangle^{\frac{1}{p_S(3)}}r^{-1} |I_0(t,r)|\,\|u\|_{X_{\kappa}}^{p_S(3)}\\
&\lesssim J_0(t,r) \|u\|_{X_{\kappa}}^{p_S(3)},
\end{align*}
 where
 \begin{align*}
 	J_0(t,r)&:=[\log(\langle t-r\rangle)]^{\frac{1}{p_S(3)}}\frac{\langle t+r\rangle\langle t-r\rangle^{\frac{1}{p_S(3)}}}{r}\\
 	&\ \ \quad\times \int_{t-r}^{t+r}\langle\xi\rangle^{1-p_S(3)} [\log(\langle\xi\rangle)]^{-\frac{1}{p_S(3)}}(\log\log\langle\xi\rangle)\,\bar{\kappa}\left(\varepsilon_0\langle\xi\rangle^{-1}\right)\mathrm{d}\xi.
 \end{align*}
Noticing that $\varepsilon_0\langle\xi\rangle^{-1}\leqslant \varepsilon_03^{-1}\leqslant \tau_0$ with $0<\varepsilon_0\ll 1$, we directly apply \eqref{Condition-2} to derive
\begin{align}\label{tool-02}
\bar{\kappa}\left(\varepsilon_0\langle\xi\rangle^{-1}\right)\left[\log\log\left(\varepsilon_0^{-1}\langle\xi\rangle\right)\right]\lesssim 1.
\end{align}
Due to $\varepsilon_0^{-1}\gg1$, we simplify our aim as
\begin{align*}
	J_0(t,r)\lesssim [\log(\langle t-r\rangle)]^{\frac{1}{p_S(3)}}\frac{\langle t+r\rangle\langle t-r\rangle^{\frac{1}{p_S(3)}}}{r} \int_{t-r}^{t+r}\langle\xi\rangle^{1-p_S(3)} [\log(\langle\xi\rangle)]^{-\frac{1}{p_S(3)}}\mathrm{d}\xi.
\end{align*}
Next, we will estimate $J_0(t,r)$ precisely in different zones of the $(t,r)$-plane.
\begin{description}
	\item[\emph{Zone I}: $t\geqslant 2r\geqslant 0$.]  For $\xi\in[t-r,t+r]$, we know the equivalence $\langle\xi\rangle\approx\langle t+r\rangle$. It follows 
	\begin{align*}
	J_0(t,r)&\lesssim\langle t+r\rangle^{2-p_S(3)}\langle t-r\rangle^{\frac{1}{p_S(3)}} [\log(\langle t+r\rangle)]^{-\frac{1}{p_S(3)}}[\log(\langle t-r\rangle)]^{\frac{1}{p_S(3)}}\\
	&\lesssim \langle t+r\rangle^{2-p_S(3)+\frac{1}{p_S(3)}}\lesssim 1,
	\end{align*}
since $-p_S^2(3)+2p_S(3)+1=0$ for three dimensions (see \eqref{Qud-Strauss} with $n=3$).
\item[\emph{Zone II}: $0\leqslant r\leqslant 1$ and $t\leqslant 2r$.] We own the equivalence $\langle t+r\rangle\approx 3$ so that
\begin{align*}
	J_0(t,r)\lesssim \langle t+r\rangle^{1+\frac{1}{p_S(3)}}\langle t-r\rangle^{1-p_S(3)}\lesssim 1
\end{align*}
by using the bounds $2\leqslant \langle t-r\rangle\leqslant\langle t+r\rangle$.
\item[\emph{Zone III}: $r\geqslant 1$ and $r\leqslant t\leqslant 2r$.] Via the equivalences $r\approx\langle r\rangle\approx \langle t+r\rangle$, we are able to conclude
\begin{align*}
J_0(t,r)\lesssim\frac{\langle t+r\rangle}{r}\langle t-r\rangle^{\frac{1}{p_S(3)}}\int_{t-r}^{t+r}\langle\xi\rangle^{1-p_S(3)}\mathrm{d}\xi\lesssim \langle t-r\rangle^{2-p_S(3)+\frac{1}{p_S(3)}}\lesssim 1,
\end{align*}
where we applied $p_S(3)=1+\sqrt{2}>2$.
\end{description}
Summarizing, we have derived the key estimate
\begin{align*}
\omega(\langle t-r\rangle)\langle t+r\rangle\langle t-r\rangle^{\frac{1}{p_S(3)}}|Lu(t,r)|\lesssim J_0(t,r)\|u\|_{X_{\kappa}}^{p_S(3)}\lesssim \|u\|_{X_{\kappa}}^{p_S(3)}
\end{align*}
for any $t\geqslant r$.\medskip

\noindent\textbf{Estimates for $\widetilde{I}_0(t,r)$ with $t\leqslant r$.} This part just discusses the case for $r\geqslant 1$ since the situation in a compact set $t\leqslant r\leqslant 1$ is trivial. As before, we take the change of variables \eqref{Change_of-variable}. According to $s\geqslant0$ and $\rho\geqslant r-(t-s)$, we deduce $r-t\leqslant \eta\leqslant\xi$, while from $s\geqslant 0$ and $|\rho-r|\leqslant t-s$ it follows $r-t\leqslant \xi\leqslant r+t$. For this reason, we find
\begin{align*}
	 \widetilde{I}_0(t,r)\lesssim\int_{r-t}^{r+t}\langle\xi\rangle^{-p_S(3)}\int_{r-t}^{\xi}[\omega(\langle\eta\rangle)]^{-p_S(3)}\langle\xi+\eta\rangle\langle\eta\rangle^{-1}\mu\left(\varepsilon_0[\omega(\langle\eta\rangle)]^{-1}\langle\xi\rangle^{-1}\langle\eta\rangle^{-\frac{1}{p_S(3)}}\right)\mathrm{d}\eta\mathrm{d}\xi.
\end{align*}
The relation $[r-t,\xi]\subset[-\xi,\xi]$ and the application of Lemma \ref{Lemma-Key-Integral} associated with \eqref{tool-02} yield
\begin{align*}
	\widetilde{I}_0(t,r)&\lesssim\int_{r-t}^{r+t}\langle\xi\rangle^{1-p_S(3)} [\log(\langle\xi\rangle)]^{-\frac{1}{p_S(3)}}(\log\log\langle\xi\rangle)\,\bar{\kappa}\left(\varepsilon_0\langle\xi\rangle^{-1}\right)\mathrm{d}\xi\\
	&\lesssim[\log(\langle r-t\rangle)]^{-\frac{1}{p_S(3)}}\int_{r-t}^{r+t}\langle\xi\rangle^{1-p_S(3)} \mathrm{d}\xi\\
	&\lesssim \langle t-r\rangle^{2-p_S(3)}[\log(\langle t-r\rangle)]^{-\frac{1}{p_S(3)}},
\end{align*}
due to $2-p_S(3)=1-\sqrt{2}<0$. Hence,
\begin{align*}
\omega(\langle t-r\rangle)\langle t+r\rangle\langle t-r\rangle^{\frac{1}{p_S(3)}}|Lu(t,r)|&\lesssim\omega(\langle t-r\rangle)\langle t+r\rangle\langle t-r\rangle^{\frac{1}{p_S(3)}}r^{-1}|\widetilde{I}_0(t,r)|\,\|u\|_{X_{\kappa}}^{p_S(3)}\\
& \lesssim\frac{\langle t+r\rangle}{r}\langle t-r\rangle^{2-p_S(3)+\frac{1}{p_S(3)}}\|u\|_{X_{\kappa}}^{p_S(3)}\\
&\lesssim \|u\|_{X_{\kappa}}^{p_S(3)}
\end{align*}
by the same reason as the one in \emph{Zone III} of estimates for $I_0(t,r)$.

Thus, we may claim
\begin{align}\label{Chen-Ineq-01}
\big\|\omega(\langle t-|r|\rangle)\langle t+|r|\rangle\langle t-|r|\rangle^{\frac{1}{p_S(3)}}Lu(t,r)\big\|_{L^{\infty}([0,+\infty)\times\mb{R})}\lesssim \|u\|_{X_{\kappa}}^{p_S(3)}
\end{align}
with the weighted factor defined in \eqref{Weighted-function} and the conditions \eqref{GESDS-Condition} as well as \eqref{Speical-mu}. As a by-product, due to
\begin{align*}
	|Lu(t,r)-L\tilde{u}(t,r)|\lesssim\frac{1}{r}\|u-\tilde{u}\|_{X_{\kappa}}\left(\|u\|_{X_{\kappa}}^{p_S(3)-1}+\|\tilde{u}\|_{X_{\kappa}}^{p_S(3)-1} \right)\times
	\begin{cases}
		r^{-1}I_0(t,r)&\mbox{when}\ \ t\geqslant r,\\
		r^{-1}\widetilde{I}_0(t,r)&\mbox{when}\ \ t\leqslant r,
	\end{cases}
\end{align*}
one can prove
\begin{align*}
	&\big\|\omega(\langle t-|r|\rangle)\langle t+|r|\rangle\langle t-|r|\rangle^{\frac{1}{p_S(3)}}\big(Lu(t,r)-L\tilde{u}(t,r)\big)\big\|_{L^{\infty}([0,+\infty)\times\mb{R})}\\
	&\qquad\lesssim \|u-\tilde{u}\|_{X_{\kappa}}\left(\|u\|_{X_{\kappa}}^{p_S(3)-1}+\|\tilde{u}\|_{X_{\kappa}}^{p_S(3)-1} \right),
\end{align*}
by the same way as those for \eqref{Chen-Ineq-01}.

Summarizing the last statements, we have completed the proof of the desired inequalities \eqref{Crucial-01} as well as \eqref{Crucial-02}, namely, the proof of Theorem \ref{Thm-GESDS} is completed. Furthermore, by employing Banach's fixed point theorem, with the weighted data, we can get the next global (in time) existence result with some pointwise decay estimates.
\begin{coro}\label{Coro-Decay}
	Let us consider a modulus of continuity $\mu=\mu(\tau)$ with $\mu(0)=0$ fulfilling \eqref{GESDS-Condition} and \eqref{Speical-mu}. Let
	 $u_0\in(\ml{A}_{\kappa}\cap \ml{C}^2)$ and $u_1\in(\ml{B}_{\kappa+1}\cap \ml{C}^1)$  be radial with $\kappa=1+\frac{1}{p_S(3)}$. Then, there exists $0<\varepsilon_0\ll 1$ such that for any $\varepsilon\in(0,\varepsilon_0)$ fulfilling  $\|u_0\|_{\ml{A}_{\kappa}}+\|u_1\|_{\ml{B}_{\kappa+1}}<\varepsilon_0$,  the semilinear Cauchy problem \eqref{Semilinear-Wave-Modulus} for $n=3$ admits a uniquely determined global (in time) small data radial solution $u\in\ml{C}([0,+\infty)\times\mb{R}^3)$. Furthermore, the solution fulfills the following pointwise decay estimates:
	   \begin{align*}
	   	|u(t,r)|\lesssim [\log(\langle t-|r|\rangle)]^{-\frac{1}{p_S(3)}}\langle t+|r|\rangle^{-1}\langle t-|r|\rangle^{-\frac{1}{p_S(3)}}\left(\|u_0\|_{\ml{A}_{\kappa}}+\|u_1\|_{\ml{B}_{\kappa+1}}\right).
	   \end{align*}
   Here, the data spaces $\ml{A}_{\kappa}$ and $\ml{B}_{\kappa}$ were defined in Proposition \ref{Prop-linear-decay}.
\end{coro}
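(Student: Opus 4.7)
The plan is to run a Banach fixed-point argument in the weighted space $X_{\kappa}$ with $\kappa=1+\tfrac{1}{p_S(3)}$, recycling the nonlinear estimates \eqref{Crucial-01} and \eqref{Crucial-02} that were already established in the proof of Theorem \ref{Thm-GESDS}. Since those estimates were derived without any use of the compact support of the data, they are available here verbatim once we can control the linear part in the $X_{\kappa}$-norm. That control is given by Proposition \ref{Prop-linear-decay}: for $u_0\in\ml{A}_{\kappa}\cap\ml{C}^2$ and $u_1\in\ml{B}_{\kappa+1}\cap\ml{C}^1$, one has $\|v\|_{X_{\kappa}}\lesssim\|u_0\|_{\ml{A}_{\kappa}}+\|u_1\|_{\ml{B}_{\kappa+1}}$. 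This is the whole reason that the data spaces $\ml{A}_{\kappa}$ and $\ml{B}_{\kappa+1}$ were introduced with precisely the logarithmic-polynomial weight matching $X_{\kappa}$.

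I would define the nonlinear map $N(u):=v+Lu$ on the closed ball
\begin{equation*}
\ml{M}_{\delta}:=\bigl\{u\in X_{\kappa}:\ \|u\|_{X_{\kappa}}\leqslant\delta\bigr\}
\end{equation*}
for a sufficiently small $\delta>0$. Combining Proposition \ref{Prop-linear-decay} with \eqref{Crucial-01} yields
\begin{equation*}
\|N(u)\|_{X_{\kappa}}\leqslant C_{\mathrm{lin}}\bigl(\|u_0\|_{\ml{A}_{\kappa}}+\|u_1\|_{\ml{B}_{\kappa+1}}\bigr)+C_{\mathrm{nl}}\|u\|_{X_{\kappa}}^{p_S(3)},
\end{equation*}
and \eqref{Crucial-02} yields
\begin{equation*}
\|N(u)-N(\tilde{u})\|_{X_{\kappa}}\leqslant C_{\mathrm{nl}}\|u-\tilde{u}\|_{X_{\kappa}}\bigl(\|u\|_{X_{\kappa}}^{p_S(3)-1}+\|\tilde{u}\|_{X_{\kappa}}^{p_S(3)-1}\bigr).
\end{equation*}
Choosing first $\delta$ so small that $2C_{\mathrm{nl}}\delta^{p_S(3)-1}\leqslant\tfrac{1}{2}$, and then $\varepsilon_0$ so small that $C_{\mathrm{lin}}\varepsilon_0\leqslant\tfrac{\delta}{2}$, the map $N$ becomes a contraction from $\ml{M}_{\delta}$ into itself whenever $\|u_0\|_{\ml{A}_{\kappa}}+\|u_1\|_{\ml{B}_{\kappa+1}}<\varepsilon_0$. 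Banach's fixed-point theorem then produces a unique $u\in\ml{M}_{\delta}$ with $u=v+Lu$, which is the desired global (in time) mild solution in the sense of Definition \ref{Defn-mild-solution}, and continuity $u\in\ml{C}([0,+\infty)\times\mb{R}^3)$ is inherited from the ambient space $X_{\kappa}$.

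The pointwise decay is then immediate from membership in $X_{\kappa}$: by definition of the norm,
\begin{equation*}
|u(t,r)|\leqslant[\omega(\langle t-|r|\rangle)]^{-1}\langle t+|r|\rangle^{-1}\langle t-|r|\rangle^{-\frac{1}{p_S(3)}}\|u\|_{X_{\kappa}},
\end{equation*}
and since $\|u\|_{X_{\kappa}}\leqslant 2C_{\mathrm{lin}}(\|u_0\|_{\ml{A}_{\kappa}}+\|u_1\|_{\ml{B}_{\kappa+1}})$ from the fixed-point construction, substituting the weight $\omega(\tau)=(\log\tau)^{1/p_S(3)}$ yields exactly the announced decay. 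The only genuine work, verifying \eqref{Crucial-01}--\eqref{Crucial-02}, has already been done in Subsection \ref{Sub-4.3}, so I do not expect any serious obstacle here; the only point requiring mild care is making the choice of $\delta$ and $\varepsilon_0$ consistent with the smallness hypothesis $\|u\|_{X_{\kappa}}\leqslant\varepsilon_0$ that was implicitly invoked in deriving \eqref{Crucial-01}, and this is handled by shrinking $\varepsilon_0$ once more if necessary.
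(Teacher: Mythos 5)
Your proposal is correct and follows exactly the route the paper intends: a Banach fixed-point argument in $X_{\kappa}$ built from Proposition \ref{Prop-linear-decay} and the already-established estimates \eqref{Crucial-01}--\eqref{Crucial-02}, with the pointwise decay read off directly from the $X_{\kappa}$-norm since $\omega(\tau)=(\log\tau)^{1/p_S(3)}$ and $\kappa-1=\tfrac{1}{p_S(3)}$. Your remark about making $\delta$ and $\varepsilon_0$ consistent with the smallness $\|u\|_{X_{\kappa}}\leqslant\varepsilon_0$ used in deriving \eqref{Crucial-01} is exactly the care required, and the argument is complete.
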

\begin{remark}
One of our new tools is the introduction of new weighted data spaces with logarithmic factors so that we finally can derive polynomial-logarithmic type decay estimates for the global (in time) radial solutions to the Cauchy problem (\ref{G1}) with modulus of continuity from the previous Corollary \ref{Coro-Decay}.
\end{remark}

\section{Final remarks}\label{Section-fINAL}
\setcounter{equation}{0}
$\ \ \  \ $In this paper, we derived a  blow-up result and a global (in time) existence result for semilinear classical wave equations with a modulus of continuity in the nonlinearity $|u|^{p_S(n)}\mu(|u|)$. Specially, considering a modulus of continuity $\mu=\mu(\tau)$ with $\mu(0)=0$ carrying the form \eqref{Int-example} with $0<\tau_0\ll 1$, we described the critical regularity of nonlinearities $|u|^{p_S(3)}\mu(|u|)$ by the threshold $\gamma=\frac{1}{p_S(3)}$.

According to the general blow-up condition \eqref{Assumption-Blow-up} and the global (in time) existence result in Theorem \ref{Thm-GESDS}, we expect that the general threshold is described by the quantity \eqref{Conjecture-Chen-Reissig}, i.e.
\begin{align}\label{Conjecture}
C_{\mathrm{Str}}=\lim_{\tau \to 0^+}\mu(\tau)\left(\log\frac{1}{\tau}\right)^{\frac{1}{p_S(n)}}.
\end{align}
Let us explain it via three situations concerning the value of $C_{\mathrm{Str}}\geqslant0$.
\begin{itemize}
	\item {\bf Blow-up of solutions when $C_{\mathrm{Str}}=+\infty$}:  Due to the proposed condition \eqref{Assumption-Blow-up} in Theorem \ref{Thm-Blow-up}, the validity of the last conjecture from the blow-up viewpoint is already known when $C_{\mathrm{Str}}=+\infty$.
	\item {\bf Blow-up of solutions when $C_{\mathrm{Str}}\in(0,+\infty)$}: Due to the proposed condition \eqref{Assumption-Blow-up} in Theorem \ref{Thm-Blow-up}, we already know the blow-up phenomenon occurs when $C_{\mathrm{Str}}\in[c_l,+\infty)$ with a suitably large constant $c_l\gg1$. Recently, the blow-up result for semilinear classical wave equations with modulus of continuity in derivative type nonlinearity has been studied by \cite{Chen=2023}. Considering the model $u_{tt}-\Delta u=|u_t|^{p_G(n)}\mu(|u_t|)$ with the Glassey exponent $p_G(n):=\frac{n+1}{n-1}$ for $n\geqslant 2$, the author of \cite{Chen=2023} proposes a new blow-up condition even for the completed intermediate case as follows:
	\begin{align*}
		\lim\limits_{\tau\to 0^+}\mu(\tau)\left(\log\frac{1}{\tau}\right)=C_{\mathrm{Gla}}\in(0,+\infty].
	\end{align*}
 Motivated by the above explanation, we believe the blow-up result still holds in the intermediate case $C_{\mathrm{Str}}\in(0,c_l)$. However, due to some technical difficulties, the rigorous justification is still challenging.
\item {\bf Global (in time) existence of solutions when $C_{\mathrm{Str}}=0$}: Due to the proposed condition \eqref{GESDS-Condition}, i.e. $C_{\mathrm{Str}}=0$,  and the decay assumption \eqref{Speical-mu} in Theorem \ref{Thm-GESDS}, our conjecture  is partially verified from the global (in time) existence perspective.
\end{itemize}
Lastly, we underline that the validity of our conjecture \eqref{Conjecture} has been verified in the present manuscript for the semilinear three dimensional Cauchy problem \eqref{Semilinear-Wave-Modulus} with a modulus of continuity fulfilling $\mu(0)=0$ and $\mu(\tau)=c_l(\log\frac{1}{\tau})^{-\gamma}$ with $c_l\gg1$ when $\tau\in(0,\tau_0]$, because the global (in time) existence result for $\gamma>\frac{1}{p_S(3)}$ and the blow-up result for $0<\gamma\leqslant\frac{1}{p_S(3)}$  have been rigorously demonstrated in Theorems \ref{Thm-GESDS} and \ref{Thm-Blow-up}, respectively.

\section*{Acknowledgments}
The first author was partially supported by the National Natural Science Foundation of China (grant No. 12171317) and Guangdong Basic and Applied Basic Research Foundation (grant No. 2023A1515012044).

\end{document}